\theoremstyle{plain} 
\theoremstyle{definition} 
\newtheorem{thm}{Theorem}[section]
\newtheorem{lem}[thm]{Lemma}
\newtheorem{prop}[thm]{Proposition}
\theoremstyle{definition}
\newtheorem{defn}{Definition}[section]
\theoremstyle{remark}
\newcommand{\half}{\frac{1}{2}}
\newcommand{\be}{\begin{equation}}
\newcommand{\ee}{\end{equation}}
\newcommand{\bea}{\begin{eqnarray}}
\newcommand{\eea}{\end{eqnarray}}
\newcommand{\ben}{\begin{eqnarray*}}
	\newcommand{\een}{\end{eqnarray*}}
\newcommand{\bt}{\begin{split}}
	\newcommand{\et}{\end{split}}
\newcommand{\bet}{\begin{equation}}
\newcommand{\mr}{\mathbb{R}}
\newcommand{\ra}{\rightarrow}
\newcommand{\beq}{\begin{equation*}}
\newcommand{\eeq}{\end{equation*}}
\newcommand{\pa}{\partial}
\newcommand{\bp}{\bar{\partial}}
\begin{document}

\title[Characterization of Curvature positivity]
{Characterization of Curvature positivity of Riemannian metrics on flat vector bundles}

\author[F. Deng]{Fusheng Deng}
\address{Fusheng Deng: \ School of Mathematical Sciences, University of Chinese Academy of Sciences\\ Beijing 100049, P. R. China}
\email{fshdeng@ucas.ac.cn}

\author[X.Zhang]{Xujun Zhang}
\address{Xujun Zhang: \ School of Mathematical Sciences, University of Chinese Academy of Sciences\\ Beijing 100049, P. R. China}
\email{zhangxujun16@mails.ucas.ac.cn}

\begin{abstract}
We give a characterization of Nakano positivity of Riemannian flat vector bundles over bounded domains $D\subset\mr^n$ in terms of solvability
of the $d$ equation with certain good $L^2$ estimate condition.
As an application, we give an alternative proof of the matrix-valued Prekopa's theorem that is originally proved by Raufi.
Our methods are inspired by the recent works of Deng-Ning-Wang-Zhou on characterization of Nakano positivity of Hermitian holomorphic vector
bundles and positivity of direct image sheaves associated to holomorphic fibrations.
\end{abstract}

\maketitle

\section{Introduction}\label{sec:intro}
Recently in \cite{DNW19}, Deng-Ning-Wang introduce a new concept-the optimal $L^2$ estimate condition-for plurisubharmonic functions.
They prove that a $C^2$ function must be plurisubharmonic if it satisfies the optimal $L^2$ estimate condition,
which means that plursubharmonic functions are the only choice for weights in H\"ormander's $L^2$ estimate for the $\bar\partial$ equation.
This result can be roughly viewed as the converse of H\"ormander's $L^2$ theory for $\bar\partial$.
As a continuation of this work, Deng-Ning-Wang-Zhou in \cite{DNWZ20} find a characterization of Nakano positivity for Hermitian holomorphic vector bundles,
which is applied to give a simple and transparent proof of Berndtsson's fundamental result on the Nakano positivity of certain direct image sheaves \cite{Ber09}.

Inspired by the above mentioned works, the purpose of the present paper is to establish the parallel results for convex functions and for curvature of Riemannnian metrics on flat vector bundles.
Before stating the main results, we first introduce some notions.
\begin{defn}\label{def:optimal $L^2$ estimate properties for function}
Let $\phi$ be a locally integrable real valued function on a domain $D\subset\mr^n$.
We say that $\phi$ satisfies \emph{the optimal $d-L^2$ estimate property}, if for any $d$-closed smooth $1$-form $f=\sum^{n}_{i=1} f_{i}dx_i$ with compact support on $D$,
and any smooth strictly convex function $\psi$ on $D$, the equation $du=f$ can be solved on $D$ with the estimate
 $$
 \int_{D} |u|^2 e^{-\phi-\psi} dx \leq \int_{D}\sum^{n}_{i,j=1}\psi^{ij}f_{i}f_{j}e^{-\phi-\psi} dx,
 $$
 where  $dx$ is the Lebesgue measure on $\mr^n$  and $(\psi^{ij})_{n\times n}$ stands for the inverse of the matrix $(\psi_{ij})_{n\times n}$, with $\psi_{ij}=\frac{\pa^2 \psi}{\pa x_{i} \pa x_{j}}$.
\end{defn}

The first main result is the following

\begin{thm}\label{thm:char convexity}
Let $\phi:D\ra\mr$ be a $C^2$ function on a domain $D\subset\mr^n$.
If $\phi$ satisfies the optimal $d-L^2$ estimate condition, then $\phi$ must be a convex function.
\end{thm}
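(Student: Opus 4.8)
The plan is to argue by contradiction in two stages: first distil the optimal $d$-$L^2$ estimate property into a Poincar\'e-type inequality, and then test that inequality against auxiliary weights whose associated Gaussian concentrates at a point where $\phi$ fails to be convex.

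\emph{Stage 1.} For $g\in C^\infty_c(D)$, the $1$-form $f:=dg=\sum_i\pa_ig\,dx_i$ is smooth, $d$-closed and compactly supported in $D$, so for every smooth strictly convex $\psi$ on $D$ the hypothesis gives $u$ with $du=dg$ and $\int_D|u|^2e^{-\phi-\psi}\,dx\le\int_D\sum_{i,j}\psi^{ij}\pa_ig\,\pa_jg\,e^{-\phi-\psi}\,dx$. Since $D$ is connected and $d(u-g)=0$, we have $u=g+c$ for a constant $c$; and because $g$ has compact support, $\int|g+c|^2e^{-\phi-\psi}\,dx$ is minimized over $c\in\mr$ at $c=-\langle g\rangle$, the weighted mean of $g$ (with the convention $\langle g\rangle:=0$ when $e^{-\phi-\psi}\notin L^1(D)$). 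Writing $w:=e^{-\phi-\psi}$, this yields
\be\label{eq:PI}
 \int_D|g-\langle g\rangle|^2\,w\,dx\ \le\ \int_D\sum_{i,j=1}^n\psi^{ij}\,\pa_ig\,\pa_jg\,w\,dx
\ee
for all $g\in C^\infty_c(D)$ and all smooth strictly convex $\psi$ on $D$. (When $\phi$ is convex, \eqref{eq:PI} is exactly the Brascamp--Lieb inequality for the weight $\phi+\psi$; this is the heuristic behind the equivalence.)

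\emph{Stage 2.} Suppose $\phi$ is not convex, so that $\sum_{i,j}\phi_{ij}(p)a_ia_j<0$ for some $p\in D$, $a\in\mr^n$, where $\phi_{ij}:=\frac{\pa^2\phi}{\pa x_i\pa x_j}$. Fix $r>0$ with $\overline{B(p,r)}\subset D$ and $\chi\in C^\infty_c(B(0,1))$ with $\chi\equiv1$ on $B(0,\tfrac12)$, and apply \eqref{eq:PI} to $g(x):=\big(a\cdot(x-p)\big)\,\chi\big((x-p)/r\big)$ and to $\psi=\psi_t(x):=\tfrac t2|x-p|^2$ (so $\psi_t^{ij}=\tfrac1t\delta_{ij}$), letting $t\to\infty$. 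Under $x=p+t^{-1/2}y$ the measure $e^{-\psi_t}\,dx$ becomes the fixed Gaussian $t^{-n/2}e^{-|y|^2/2}\,dy$ and $g$ becomes a constant multiple of $a\cdot y$, away from a cutoff that equals $1$ on $\{|y|\le\tfrac r2\sqrt t\}$ and hence affects every integral below only by $O(e^{-tr^2/8})$. Expanding $\phi(p+t^{-1/2}y)=\phi(p)+t^{-1/2}\nabla\phi(p)\cdot y+\tfrac1{2t}\sum_{i,j}\phi_{ij}(p)y_iy_j+\rho_t(y)$, with $|\rho_t(y)|\le\tfrac{\omega(r)}{2t}|y|^2$ on the support of $g$ (here $\omega$ is a modulus of continuity of the Hessian of $\phi$ at $p$, so $\omega(r)\to0$ as $r\to0$), and computing the resulting Gaussian moments, one finds that the leading, order-$t^{-1}$, terms on the two sides of \eqref{eq:PI} coincide exactly --- this is precisely the equality case, attained by linear functions, of the Brascamp--Lieb inequality --- so that, with $\Delta_t$ denoting the right-hand side of \eqref{eq:PI} minus the left-hand side,
\be\label{eq:asymp}
 \Delta_t= t^{-2-n/2}\,e^{-\phi(p)}(2\pi)^{n/2}\Big(\sum_{i,j=1}^n\phi_{ij}(p)\,a_ia_j+O(\omega(r))+o(1)\Big)\qquad(t\to\infty).
\ee
Choosing $r$ so small that $|O(\omega(r))|<\tfrac12\big|\sum_{i,j}\phi_{ij}(p)a_ia_j\big|$, and then $t$ large, makes $\Delta_t<0$, contradicting \eqref{eq:PI}. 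Hence $\phi$ is convex.

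The main obstacle is the asymptotics \eqref{eq:asymp}. Because the order-$t^{-1}$ terms cancel, the whole argument is decided at \emph{second} order in $t^{-1}$, so the expansion must be carried to that order and, above all, the Taylor remainder $\rho_t$ must be controlled there; since $\phi$ is only assumed $C^2$, $\rho_t$ is merely $o(|y|^2/t)$, which is exactly why one introduces the modulus $\omega$ and uses the freedom to shrink the localization radius $r$, thereby making the $O(\omega(r))$ error in \eqref{eq:asymp} harmless. Everything else --- the exponentially small cutoff contribution, the higher-order terms in the expansion of $e^{-\phi(p+t^{-1/2}y)}$, and the subtraction of the mean $\langle g\rangle$ --- is routine once this is handled. (Alternatively, one may recast the optimal $d$-$L^2$ estimate property, via functional-analytic duality, as an a priori estimate for $d^*$ and then invoke the Bochner formula to force the Hessian of $\phi$ to be positive semidefinite pointwise; the concentration argument above is the more elementary route.)
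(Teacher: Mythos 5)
Your proof is correct, and the key technical points you flag are exactly the right ones, but your route is genuinely different from the paper's. The paper never converts the hypothesis into a Poincar\'e/variance inequality. Instead it pairs the unknown solution $u$ against an arbitrary compactly supported $1$-form $\alpha$, writes $|(\alpha,f)_{\phi+\psi}|=|(\delta_{\phi+\psi}\alpha,u)_{\phi+\psi}|\le\|u\|_{\phi+\psi}\,\|\delta_{\phi+\psi}\alpha\|_{\phi+\psi}$, controls $\|\delta_{\phi+\psi}\alpha\|^2$ by Berndtsson's Bochner identity (Lemma 2.1), and then chooses $\alpha=(\mathrm{Hess}\,\psi)^{-1}f$ so that the $\mathrm{Hess}\,\psi$ contributions on the two sides cancel \emph{algebraically}, leaving the clean inequality $\int_D\sum_{j,k}\phi_{jk}\alpha_j\alpha_k e^{-\phi-\psi}+\int_D\sum_{j,k}|\pa\alpha_j/\pa x_k|^2e^{-\phi-\psi}\ge0$ for every strictly convex $\psi$. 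Testing this with $f=d\bigl((\zeta\cdot x)\chi\bigr)$ and $\psi_s=s(|x|^2-r^2/4)$ then yields a contradiction already at \emph{leading} order in $s$, using only the crude bounds $|\alpha^s_j|\le C/s$ and $|\pa\alpha^s_j/\pa x_k|\le C/s$; no second-order expansion and no modulus of continuity of $\mathrm{Hess}\,\phi$ are needed. Your approach --- exploiting that $du=dg$ on a connected domain forces $u=g+c$, hence a Brascamp--Lieb-type variance inequality for the weight $e^{-\phi-\psi}$, and then reading off the sign of $\sum\phi_{ij}(p)a_ia_j$ from the second-order term of a Gaussian concentration --- buys elementarity: it bypasses the Bochner identity and the adjoint operator entirely. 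The price is exactly the delicacy you identify: the first-order terms cancel (equality case for linear functions), so the whole argument lives at order $t^{-2}$, which forces the careful bookkeeping of the Taylor remainder via $\omega(r)$ since $\phi$ is only $C^2$. The paper's dual/Bochner scheme, by contrast, is the one that transfers essentially verbatim to the matrix-weight setting of Theorem 1.4 (where the analogous second-order Gaussian asymptotics with a noncommuting weight $g(x)$ would be considerably messier), which is why the paper adopts it even in the scalar case; your closing parenthetical correctly identifies that scheme as the alternative.
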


In Theorem \ref{thm:char convexity}, the convexity of $\phi$ means that the Hessian of $\phi$ is semi-positive everywhere.
The fact that a convex function on $\mr^n$ satisfies the optimal $d-L^2$ condition was proved in \cite{Bra-Lie76}.
So Theorem \ref{thm:char convexity} can be understood as a converse of $L^2$-estimate for the $d$-equation in \cite{Bra-Lie76}.
Theorem \ref{thm:char convexity} and its proof explains why the weight functions in the $L^2$ estimate of the $d$-equation should be convex,
and hence answers a question posed by Berndtsson in \cite{Ber07}("It is also not clear why the weight function should be required to be convex" in Introduction in \cite{Ber07}).

Applying Theorem \ref{thm:char convexity}, we can give a very simple and transparent proof of Prekopa's theorem for convex functions.

\begin{thm}[\cite{Pre73}]\label{thm:prekopa}
Assume that $\tilde \phi(x,y)$ is a convex function on $\mr^n_x\times\mr^m_y$. Then the function $\phi(x)$ defined by
$$e^{-\phi(x)}:=\int_{\mr^m}e^{-\tilde \phi(x,y)}dy$$
is a convex function on $\mr^n$.
\end{thm}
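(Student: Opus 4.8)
We outline a proof deducing Theorem~\ref{thm:prekopa} from Theorem~\ref{thm:char convexity} together with the $L^2$-estimate for the $d$-equation with convex weight of Brascamp--Lieb \cite{Bra-Lie76}.

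The plan is to show that the function $\phi$ defined by $e^{-\phi(x)}=\int_{\mr^m}e^{-\tilde\phi(x,y)}\,dy$ satisfies the optimal $d$-$L^2$ estimate property on $\mr^n$, which by Theorem~\ref{thm:char convexity} forces $\phi$ to be convex. Since convexity passes to pointwise limits, I would first carry out a routine regularization — mollifying $\tilde\phi$ in all variables and adding a small $\varepsilon(|x|^2+|y|^2)$ — in order to reduce to the case where $\tilde\phi$ is smooth (the added quadratic term also gives Gaussian decay in $y$, so that $\phi$ is finite and $C^2$); the general statement then follows by letting $\varepsilon\downarrow0$, with the degenerate cases where $\phi\equiv-\infty$ on an affine subspace handled by the usual conventions. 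Assume henceforth that $\tilde\phi$ is smooth and $\phi\in C^2$.

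Now fix a $d$-closed smooth $1$-form $f=\sum_{i=1}^n f_i\,dx_i$ with compact support in $\mr^n$ and a smooth strictly convex $\psi$ on $\mr^n$. The key idea is to lift to $\mr^n_x\times\mr^m_y$. Let $\tilde f:=\sum_{i=1}^n f_i(x)\,dx_i$, viewed as a $1$-form on $\mr^{n+m}$; it is $d$-closed, being a pullback of $f$. For $\delta>0$ apply the Brascamp--Lieb estimate \cite{Bra-Lie76} on $\mr^{n+m}$, with convex weight $\tilde\phi$ and with the strictly convex function $\psi(x)+\delta|y|^2$, whose Hessian is the block matrix $\mathrm{diag}\big((\psi_{ij})_{i,j\le n},\,2\delta I_m\big)$; since $\tilde f$ has no $dy$-components, its inverse-Hessian quadratic form equals exactly $\sum_{i,j=1}^n\psi^{ij}f_if_j$. (One uses the estimate in the form valid for $1$-forms of finite weighted norm on the right-hand side, so that the non-compactness of $\mathrm{supp}\,\tilde f$ in the $y$-directions is harmless.) This produces $\tilde u_\delta$ with $d\tilde u_\delta=\tilde f$ and
\[
\int_{\mr^{n+m}}|\tilde u_\delta|^2\,e^{-\tilde\phi-\psi-\delta|y|^2}\,dx\,dy\ \le\ \int_{\mr^{n+m}}\Big(\sum_{i,j=1}^n\psi^{ij}f_if_j\Big)e^{-\tilde\phi-\psi-\delta|y|^2}\,dx\,dy .
\]
Because $\tilde f$ has vanishing $dy_k$-components, $d\tilde u_\delta=\tilde f$ forces $\partial_{y_k}\tilde u_\delta\equiv0$, so $\tilde u_\delta(x,y)=u_\delta(x)$ depends on $x$ alone and $du_\delta=f$ on $\mr^n$. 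Integrating out $y$ and writing $w_\delta(x):=\int_{\mr^m}e^{-\tilde\phi(x,y)-\delta|y|^2}\,dy$, the inequality reads
\[
\int_{\mr^n}|u_\delta|^2\,e^{-\psi}w_\delta\,dx\ \le\ \int_{\mr^n}\Big(\sum_{i,j=1}^n\psi^{ij}f_if_j\Big)e^{-\psi}w_\delta\,dx\ \le\ \int_{\mr^n}\Big(\sum_{i,j=1}^n\psi^{ij}f_if_j\Big)e^{-\psi-\phi}\,dx=:R ,
\]
where the last step uses $0<w_\delta\le e^{-\phi}$.

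Finally I would let $\delta\downarrow0$. By monotone convergence $w_\delta\uparrow e^{-\phi}$ pointwise. Fixing $\delta_0>0$, for $\delta\le\delta_0$ we have $\int_{\mr^n}|u_\delta|^2e^{-\psi}w_{\delta_0}\,dx\le R$, so the $u_\delta$ are bounded in $L^2_{\mathrm{loc}}(\mr^n)$; as they all differ from a fixed primitive of $f$ by additive constants, those constants stay bounded, and along a subsequence $\delta_k\downarrow0$ one gets $u_{\delta_k}\to u$ pointwise for some $u$ with $du=f$. Fatou's lemma applied to $|u_{\delta_k}|^2e^{-\psi}w_{\delta_k}\to|u|^2e^{-\psi-\phi}$ then yields
\[
\int_{\mr^n}|u|^2e^{-\phi-\psi}\,dx\ \le\ \liminf_{k\to\infty}\int_{\mr^n}|u_{\delta_k}|^2e^{-\psi}w_{\delta_k}\,dx\ \le\ R\ =\ \int_{\mr^n}\Big(\sum_{i,j=1}^n\psi^{ij}f_if_j\Big)e^{-\phi-\psi}\,dx ,
\]
so $\phi$ satisfies the optimal $d$-$L^2$ estimate property and Theorem~\ref{thm:char convexity} gives that $\phi$ is convex. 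I expect the hard part to be not the main line of argument — which is the one-line observation that the pulled-back equation has a $y$-independent solution — but the surrounding analytic bookkeeping: performing the regularization so that $\phi$ is genuinely $C^2$ and finite, invoking Brascamp--Lieb in a generality that accommodates the non-compactly supported $\tilde f$ on $\mr^{n+m}$, and making the $\delta\downarrow0$ passage rigorous when $e^{-\psi}$ need not be integrable on $\mr^n$ (which is why it is phrased through local $L^2$ bounds, a subsequence, and Fatou rather than a single Hilbert-space limit).
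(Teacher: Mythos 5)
Your proposal is correct and follows essentially the same route as the paper: lift $f$ to $\mr^n_x\times\mr^m_y$, solve $d\tilde u=\tilde f$ there via the Brascamp--Lieb $L^2$ estimate, observe that $\tilde u$ is $y$-independent and hence descends, integrate out $y$ by Fubini, and conclude via Theorem~\ref{thm:char convexity}. Your auxiliary $\delta|y|^2$ term and the Fatou passage $\delta\downarrow0$ are a more careful handling of a point the paper treats silently (the pulled-back weight $\tilde\psi$ is degenerate in the $y$-directions), but they do not change the underlying argument.
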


Given a function $\phi$ on a domain $D\subset\mr^n$, we can view $h:=e^{-\phi}$ as a Riemannian metric on the trivial line bundle $\pi:D\times\mr\ra\mr^n$ over $D$,
where $\pi$ is the natural projection. In analogue to the case of Hermitian holomorphic line bundle, we understand that $h$ has semi-positive curvature if $\phi$ is convex.
Motivated by this observation and the curvature operator defined for holomorphic vector bundles,
Raufi introduces the notion of Nakano positivity (phrased as log concavity in the sense of Nakano) for Riemannian metrics on trivial vector bundles.

Let $D$ be a domain in $\mr^n$ and let $p: E=D\times\mr^r\ra D$ be the trivial vector bundle over $D$ of rank $r$.
Then a Riemannian metric on $E$ can be naturally identified with a map $g:D\ra Sym_r(\mr)^+$, where $Sym_r(\mr)^+$ is the space of positive definite matrices of order $r$.
Given such a map $g$, the corresponding inner product on the fiber of $E$ over $x\in D$ is given by
$$\langle u,v \rangle_g=\langle g(x)u, v\rangle=v^Tg(x)u, \ u,v\in\mr^r,$$
where $v^T=(v_1,\cdots, v_r)$ is the transpose of $v$.

\begin{defn}{(\cite{Rau13})}\label{def:log-concave-matrix}
Let $g: D \ra  Sym_{r}(\mr)^{+}$ be a $C^2$ Riemannian metric on $E$. Set
$$
\theta^{g}_{jk}=-\frac{\pa}{\pa x_{k}}(g^{-1}\frac{\pa g}{\pa x_j}), \forall 1\leq j,k \leq n,
$$
here the differentiation should be interpreted elementwise. We say that $(E, g)$ is Nakano positive (semipositive) if we have
$$
\sum^{n}_{j,k=1} \langle \theta^{g}_{jk}(x)u_{j},u_{k} \rangle _{g}>0\ (\geq 0)
$$
holds for any $x\in D$ and any  $n$-tuple of vectors $\left\{u_{j} \right\}^{n}_{j=1} \subset \mr^r$. If we set $\Theta^{g}=[\theta^{g}_{jk}]$, then the left side of the above inequality can be written as
$$
\langle \Theta^{g}u,u \rangle_{g}= \sum^{n}_{j,k=1} \langle \theta^{g}_{jk}(x)u_{j},u_{k} \rangle _{g}.
$$
\end{defn}

In Definition \ref{def:log-concave-matrix}, if $r=1$ and writing $g=e^{-\phi}$,
then $(E,g)$ is Nakano semipositive if and only if $\phi$ is a convex function.
Therefore, Nakano positivity can be viewed as a generalization of convexity.

Let $D$ and $E$ as above.
We denote by $\Lambda^p(D,E)$ the space of smooth $p$-forms on $D$ with values in $E$.
Formally an element $f\in \Lambda^p(D,E)$ can be written as
$$f=\sum_{1\leq i_1<\cdots<i_p\leq n}f_{i_1\dots i_p}dx_{i_1}\wedge\cdots\wedge dx_{i_p},$$
where $f_{i_1\cdots i_p}:D\ra\mr^r$ are smooth maps that are identified with smooth sections of $E$.
We can define the exterior differential $d:\Lambda^p(D,E)\ra \Lambda^{p+1}(D,E)$ in the natural way.
For any $f\in \Lambda^{p-1}(D,E)$, we have $d(df)=0\in \Lambda^{p+1}(D,E)$.

If $g$ is a Riemannian metric on $E$, and $f, f'\in \Lambda^1(D,E)$ are given by
$$f=\sum_{j=1}^n f_jdx_j, f'=\sum_{j=1}^nf'_jdx_j,$$
then the pointwise inner product of $f$ and $f'$ is given by
$$\langle f, f'\rangle_g=\sum^n_{j=1}\langle f_j, f'_j\rangle_g.$$
The inner product of $f$ and $f'$ is defined as
$$\langle\langle f,f'\rangle\rangle_g:=\int_D\langle f,f'\rangle_g dx.$$
The inner product on $\Lambda^p(D,E),\ (p\geq 0)$ is defined similarly.
We denote by $L_p^2(D,E)$ the space of measurable $p$-forms $f$ with values in $E$ such that $\|f\|^2_g:=\langle\langle f,f\rangle\rangle_g<+\infty$.
For simplicity, we denote $L^2_0(D,E)$ by $L^2(D,E)$, which is the space of measurable sections $f$ of $E$ such that $\|f\|^2_g<+\infty$.

We now generalize Definition \ref{def:optimal $L^2$ estimate properties for function} to the following
\begin{defn}\label{def:vector bundle optimal L^2}
Assume $D$ is a domain in $\mr^n$, $E=D \times \mr^r$ is the trivial vector bundle over $D$,
and $g: D \ra  Sym_{r}(\mr)^{+}$ is a $C^2$ Riemannian metric on $E$.
We say that $g$ satisfies \emph{the optimal $d-L^2$ estimate condition} if for any $C^2$ strictly convex function $\psi$ on $D$, and any $d$-closed $f \in \Lambda^1(D,E)$ with compact support,
there exists $u \in L^2 (D, E)$ satisfying $du=f$ and
$$
\int_{D} \langle u,u \rangle  _{g} e^{-\psi} dx \leq  \int_{D}\langle (Hess\; \psi)^{-1}f,f \rangle  _{g} e^{-\psi} dx.
$$

\end{defn}

Our second main result is the following.

\begin{thm}\label{thm:cha nakano positive}
Assume $D$ is a domain in $\mr^n$, $E=D \times \mr^r$ is the trivial vector bundle over $D$,
and $g: D \ra  Sym_{r}(\mr)^{+}$ is a $C^2$ Riemannian metric on $E$.
If $g$ satisfies \emph{the optimal $d-L^2$ estimate condition}, then $(E, g)$ is Nakano semipositive.
\end{thm}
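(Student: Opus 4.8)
The plan is to argue by contradiction: if $(E,g)$ fails to be Nakano semipositive at some point, I will produce a $C^2$ strictly convex weight $\psi$ and a compactly supported $d$-closed $1$-form $f=dh$ violating the optimal $d$-$L^2$ estimate, by feeding into it test data concentrating at that point at a carefully tuned rate. The first step turns the estimate into a scalar inequality. Given a smooth compactly supported $h\colon D\to\mr^r$ (viewed as a section of $E$) and a $C^2$ strictly convex $\psi$, apply the optimal $d$-$L^2$ estimate to $f:=dh$: there is $u\in L^2(D,E)$ with $du=dh$ and $\int_D\langle u,u\rangle_g e^{-\psi}dx\le\int_D\langle(Hess\,\psi)^{-1}dh,dh\rangle_g e^{-\psi}dx$. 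Since $d(u-h)=0$ and $D$ is connected, $u-h$ equals a constant section $c$, which thus lies in $L^2(D,E,g\,e^{-\psi}dx)$; so, writing $\|\cdot\|_\psi$ for the norm of that space and $V$ for its closed subspace of constant sections,
\begin{equation*}
\operatorname{dist}_\psi(h,V)^2\ \le\ \|h+c\|_\psi^2\ =\ \|u\|_\psi^2\ \le\ \int_D\langle(Hess\,\psi)^{-1}dh,dh\rangle_g\, e^{-\psi}\,dx .
\end{equation*}
This is the inequality to be contradicted.

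Suppose $(E,g)$ is not Nakano semipositive: there are a point, which after translation is $0\in D$, and $u_1,\dots,u_n\in\mr^r$ with $\sum_{j,k}\langle\theta^g_{jk}(0)u_j,u_k\rangle_g<0$. After a constant linear change of frame (which preserves both the optimal $d$-$L^2$ estimate and Nakano semipositivity) we may assume $g(0)=I_r$; write $g(x)=I_r+\sum_j g_jx_j+\tfrac12\sum_{j,k}g_{jk}x_jx_k+O(|x|^3)$ near $0$ with $g_j=\pa g/\pa x_j(0)$, $g_{jk}=\pa^2g/\pa x_j\pa x_k(0)$ symmetric. A short computation from $\theta^g_{jk}(0)=g_kg_j-g_{jk}$ gives $\sum_{j,k}\langle\theta^g_{jk}(0)u_j,u_k\rangle_g=\big|\sum_j g_ju_j\big|^2-M$ with $M:=\sum_{j,k}\langle g_{jk}u_j,u_k\rangle$ (Euclidean inner product), so the hypothesis reads $M>\big|\sum_j g_ju_j\big|^2$. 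For small $\varepsilon>0$ put $\psi_\varepsilon(x):=\tfrac1{2\varepsilon^2}|x|^2$ (smooth, strictly convex on $D$, with $(Hess\,\psi_\varepsilon)^{-1}=\varepsilon^2 I$ and $e^{-\psi_\varepsilon}$ integrable on $D$, so that $V=\mr^r$) and $h_\varepsilon(x):=\chi\big(x/(R_\varepsilon\varepsilon)\big)\,\tfrac1\varepsilon\sum_{k=1}^n x_ku_k$, where $\chi$ is a fixed radial cutoff, $\equiv1$ on the unit ball and supported in the ball of radius $2$, and $R_\varepsilon\to\infty$ slowly enough that $R_\varepsilon\varepsilon\to0$ and $R_\varepsilon^n e^{-R_\varepsilon^2/2}=o(\varepsilon^2)$; then $h_\varepsilon$ is smooth with compact support in $D$ for small $\varepsilon$.

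Substitute $\psi=\psi_\varepsilon$, $h=h_\varepsilon$ into the displayed inequality and rescale $x=\varepsilon y$; the rescaled profile $\chi(y/R_\varepsilon)\sum_k y_ku_k$ converges to the linear map $\sum_k y_ku_k$. Expanding each side with $g(\varepsilon y)=I_r+\varepsilon\sum_j g_jy_j+\tfrac{\varepsilon^2}2\sum_{j,k}g_{jk}y_jy_k+O(\varepsilon^3)$ and using the Gaussian second and fourth moments, one checks: (i) the $\varepsilon^n$-order terms of the two sides coincide up to $o(\varepsilon^{n+2})$ — this is exactly the equality case of the Gaussian Poincar\'e inequality, which is why the linear (extremal) profile is used — and the $\varepsilon^{n+1}$-order terms vanish by parity; (ii) the projection defect satisfies $\|h_\varepsilon\|_\psi^2-\operatorname{dist}_\psi(h_\varepsilon,V)^2=\varepsilon^{n+2}(2\pi)^{n/2}\big|\sum_j g_ju_j\big|^2+o(\varepsilon^{n+2})$, because the $L^2(g\,e^{-\psi_\varepsilon})$-projection of $h_\varepsilon$ onto the constants is $\approx\varepsilon\sum_j g_ju_j$; (iii) the $\varepsilon^{n+2}$-order terms of $\|h_\varepsilon\|_\psi^2$ and of the right-hand side differ by exactly $\varepsilon^{n+2}(2\pi)^{n/2}M$. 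Feeding (i)--(iii) into the inequality, dividing by $\varepsilon^{n+2}$ and letting $\varepsilon\to0$ yields $M\le\big|\sum_j g_ju_j\big|^2$, i.e. $\sum_{j,k}\langle\theta^g_{jk}(0)u_j,u_k\rangle_g\ge0$, contradicting the choice of $0$ and the $u_j$. Hence $(E,g)$ is Nakano semipositive.

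The main obstacle is the asymptotic analysis of this last step. It is essential that the leading ($\varepsilon^n$-order) terms cancel \emph{exactly} (in the limit), so that the sign is decided at order $\varepsilon^{n+2}$; this is what forces the test profile to be the Gaussian--Poincar\'e extremal. Moreover the distance-to-$V$ defect in (ii) is genuinely of size $\varepsilon^{n+2}$, not negligible, and has to be carried through: it is exactly the term that supplies the first-order contribution $\big|\sum_j g_ju_j\big|^2$ needed to reassemble the full Nakano expression $\sum_{j,k}\langle\theta^g_{jk}(0)u_j,u_k\rangle_g$. Finally, one must verify that all the remaining contributions — the Gaussian tails produced by truncating the linear profile to compact support, and the $O(|x|^3)$ Taylor remainder of $g$ — are $o(\varepsilon^{n+2})$, and it is here that the joint choice of the two rates $\varepsilon\to0$ and $R_\varepsilon\to\infty$ is used.
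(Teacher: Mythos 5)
Your proposal is correct in its essential content, but it takes a genuinely different route from the paper. The paper first converts the optimal $d$-$L^2$ estimate into a pointwise-integrated curvature inequality via duality and the Bochner-type identity of Proposition \ref{prop:Bochner type identity} (i.e.\ it estimates $\langle\langle \alpha,f\rangle\rangle_g^2\le \|d^*\alpha\|^2\|u\|^2$ and expands $\|d^*\alpha\|^2$ with the twisted metric $ge^{-\psi}$), and only then feeds in a localized linear test form together with the weight $\psi_s=s(|x|^2-a^2/4)$, $s\to\infty$; the contradiction there is soft, since the derivative terms vanish identically on the inner ball and the exterior is killed by $e^{-\psi_s}\to 0$. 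You instead bypass $d^*$ and the Bochner identity entirely: for exact data $f=dh$ the solution is $h$ plus a constant, so the hypothesis collapses to a variance-type (Brascamp--Lieb/Poincar\'e) inequality, which you then test on the Gaussian--Poincar\'e extremal profile rescaled at rate $\varepsilon$ and expand to second order. I checked the key computations: $\theta^g_{jk}(0)=g_kg_j-g_{jk}$ when $g(0)=I_r$; the order-$\varepsilon^n$ terms of the two sides agree and the order-$\varepsilon^{n+1}$ terms vanish by parity; the fourth Gaussian moment $\delta_{jk}\delta_{lm}+\delta_{jl}\delta_{km}+\delta_{jm}\delta_{kl}$ makes the difference of the order-$\varepsilon^{n+2}$ terms equal to $\varepsilon^{n+2}(2\pi)^{n/2}M$ (the $\sum_j g_{jj}$ contributions cancel); and the projection defect is $\varepsilon^{n+2}(2\pi)^{n/2}\bigl|\sum_j g_ju_j\bigr|^2+o(\varepsilon^{n+2})$, which exactly reassembles $\sum_{j,k}\langle\theta^g_{jk}(0)u_j,u_k\rangle$. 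So the argument closes. What each approach buys: the paper's proof needs the Bochner identity (a whole section) but then requires no exact cancellation of leading terms; yours is self-contained and more elementary in tools, at the price of a delicate two-parameter asymptotic expansion in which the sign is only decided at relative order $\varepsilon^2$.

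Three small points to tighten. First, $g$ is only assumed $C^2$, so the Taylor remainder is $o(|x|^2)$ rather than $O(|x|^3)$; this still gives an $o(\varepsilon^{n+2})$ error on $\{|y|\le 2R_\varepsilon\}$ provided you invoke uniformity of the Peano remainder on a compact neighborhood of $0$, using $R_\varepsilon\varepsilon\to0$. Second, the cutoff-annulus contribution is of size $\varepsilon^nR_\varepsilon^{n+2}e^{-R_\varepsilon^2/2}$ (the profile is of size $R_\varepsilon$ there and the fourth-moment terms appear), so you need $R_\varepsilon^{n+2}e^{-R_\varepsilon^2/2}=o(\varepsilon^2)$, slightly stronger than the condition you wrote; any $R_\varepsilon\sim C\sqrt{\log(1/\varepsilon)}$ with $C$ large works. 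Third, the constants need not all lie in $L^2(ge^{-\psi_\varepsilon}dx)$ if $g$ blows up near $\pa D$; but since the admissible constants form a subspace $V'\subset V$ and $\|Ph\|^2$ only decreases when $V$ shrinks, the inequality you contradict only becomes stronger, so this is harmless. None of these affects the validity of the approach.
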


Theorem 1.3 may be viewed as the converse of the following Theorem \ref{thm:L^2 estimate for vector bundle} for the $L^2$-estimate of $d$-equation,
which is a real analysis analogue of Theorem 1.1 in \cite{DNWZ20} for the $\bar\partial$ equation.
If $r=1$, then Theorem \ref{thm:cha nakano positive} reduces to Theorem \ref{thm:char convexity}.

One of the main ingredients in the proof of Theorem \ref{thm:cha nakano positive} is a Bochner type identity.

\begin{prop}\label{prop:Bochner type identity}
Assume $D$ is a domain in $\mr^n$, $E=D \times \mr^r$ is the trivial vector bundle defined over $D$, and $g: D \ra  Sym_{r}(\mr)^{+}$ is a $C^2$ Riemannian metric on $E$.
Then for any $\alpha =\sum^{n}_{i=1} \alpha_{i} dx_{i}\in \Lambda^1(D, E)$ with compact support, we have
\begin{equation*}\label{eq:Bochner-type-identity-matrix}
\int_{D} \langle  d^*\alpha,d^* \alpha \rangle _{g}dx+\int_{D} \langle  d\alpha,d \alpha \rangle _{g}dx=\int_{D}\sum^{n}_{i,j=1}\langle  \theta^{g}_{ij}\alpha_{i},\alpha_{j} \rangle  _{g}dx+\int_{D} \sum^{n}_{i,j=1} \langle   \frac{\pa \alpha_{i}}{\pa x_{j}}, \frac{\pa \alpha_{i}}{\pa x_{j}} \rangle _{g} dx,
\end{equation*}
where $d^*$ is the formal adjoint operator of $d:L^2(D,E)\ra L^2_1(D,E)$.
\end{prop}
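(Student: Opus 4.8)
The plan is to compute the two summands on the left-hand side separately, by repeated integration by parts, and to see that the gradient term on the right comes out of $\int_D\langle d\alpha,d\alpha\rangle_g\,dx$, that the curvature term comes out of $\int_D\langle d^*\alpha,d^*\alpha\rangle_g\,dx$, and that everything else cancels. Throughout I will use that $\langle u,v\rangle_g=v^Tgu=u^Tgv=\langle v,u\rangle_g$ is symmetric because $g$ is, and I set $A_k:=g^{-1}\frac{\partial g}{\partial x_k}$, so that by Definition \ref{def:log-concave-matrix} one has $\theta^g_{jk}=-\frac{\partial A_j}{\partial x_k}$, and, since $g$ and $\frac{\partial g}{\partial x_k}$ are symmetric, $A_k^Tg=\frac{\partial g}{\partial x_k}=gA_k$.

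First I would expand $d\alpha=\sum_{i<j}\bigl(\frac{\partial\alpha_i}{\partial x_j}-\frac{\partial\alpha_j}{\partial x_i}\bigr)dx_j\wedge dx_i$ and compute the pointwise norm, obtaining
\[\langle d\alpha,d\alpha\rangle_g=\sum_{i,j}\Bigl\langle\frac{\partial\alpha_i}{\partial x_j},\frac{\partial\alpha_i}{\partial x_j}\Bigr\rangle_g-\sum_{i,j}\Bigl\langle\frac{\partial\alpha_i}{\partial x_j},\frac{\partial\alpha_j}{\partial x_i}\Bigr\rangle_g.\]
The first sum is precisely the last term in the asserted identity, so it remains to prove
\[\int_D\langle d^*\alpha,d^*\alpha\rangle_g\,dx=\int_D\sum_{i,j}\langle\theta^g_{ij}\alpha_i,\alpha_j\rangle_g\,dx+\int_D\sum_{i,j}\Bigl\langle\frac{\partial\alpha_i}{\partial x_j},\frac{\partial\alpha_j}{\partial x_i}\Bigr\rangle_g\,dx.\]

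For this I would first derive the explicit formula for the formal adjoint: integrating by parts against a compactly supported section $s$ and using $\langle u,v\rangle_g=v^Tgu$ gives $g\,d^*\alpha=-\sum_k\frac{\partial}{\partial x_k}(g\alpha_k)$, i.e. $d^*\alpha=-\sum_k\bigl(\frac{\partial\alpha_k}{\partial x_k}+A_k\alpha_k\bigr)$, which is again compactly supported. A single integration by parts then yields $\int_D\langle d^*\alpha,d^*\alpha\rangle_g\,dx=\int_D\sum_k\bigl\langle\alpha_k,\frac{\partial}{\partial x_k}(d^*\alpha)\bigr\rangle_g\,dx$, and substituting
\[\frac{\partial}{\partial x_k}(d^*\alpha)=-\sum_l\Bigl(\frac{\partial^2\alpha_l}{\partial x_k\partial x_l}+\frac{\partial A_l}{\partial x_k}\alpha_l+A_l\frac{\partial\alpha_l}{\partial x_k}\Bigr)\]
breaks the integral into three groups. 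In the group carrying $\frac{\partial A_l}{\partial x_k}$, the relation $\frac{\partial A_l}{\partial x_k}=-\theta^g_{lk}$ together with $\langle u,v\rangle_g=\langle v,u\rangle_g$ identifies it, after relabeling the indices, with $\int_D\sum_{i,j}\langle\theta^g_{ij}\alpha_i,\alpha_j\rangle_g\,dx$.

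The remaining two groups I would handle with one more integration by parts, in the variable $x_l$, applied to the $\frac{\partial^2\alpha_l}{\partial x_k\partial x_l}$ term: this produces a piece equal to $\int_D\sum_{i,j}\langle\frac{\partial\alpha_i}{\partial x_j},\frac{\partial\alpha_j}{\partial x_i}\rangle_g\,dx$ plus a leftover $\int_D\sum_{k,l}\bigl(\frac{\partial\alpha_l}{\partial x_k}\bigr)^T\frac{\partial g}{\partial x_l}\alpha_k\,dx$, and this leftover cancels exactly the group carrying $A_l\frac{\partial\alpha_l}{\partial x_k}$ because $A_l^Tg=\frac{\partial g}{\partial x_l}$. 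Adding the contributions of $\int_D\langle d\alpha,d\alpha\rangle_g\,dx$ and $\int_D\langle d^*\alpha,d^*\alpha\rangle_g\,dx$ then gives the identity. I expect the main obstacle to be purely organizational — keeping track of transposes, of where $g$ sits in each term, and of the consistent renaming of summation indices, while recording the symmetries $g^T=g$, $\bigl(\frac{\partial g}{\partial x_l}\bigr)^T=\frac{\partial g}{\partial x_l}$ and $A_l^Tg=gA_l=\frac{\partial g}{\partial x_l}$; all the integrations by parts are legitimate because each $\alpha_i$ is smooth with compact support and $g$ is $C^2$.
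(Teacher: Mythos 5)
Your proposal is correct and follows essentially the same route as the paper: both arguments reduce $\int_D\langle d^*\alpha,d^*\alpha\rangle_g\,dx$ to $\int_D\sum_{i,j}\langle\theta^g_{ij}\alpha_i,\alpha_j\rangle_g\,dx+\int_D\sum_{i,j}\langle\frac{\pa\alpha_i}{\pa x_j},\frac{\pa\alpha_j}{\pa x_i}\rangle_g\,dx$ by two integrations by parts (the paper packages the first one as $\langle\langle\alpha,dd^*\alpha\rangle\rangle_g$ and the cancellation via the operators $\delta_i$), and both use the same algebraic identity expressing $\langle d\alpha,d\alpha\rangle_g$ through the symmetric and antisymmetric parts of $\frac{\pa\alpha_i}{\pa x_j}$. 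The only difference is organizational: you expand the $d\alpha$ term first and target the mixed-derivative sum, while the paper computes $\|d^*\alpha\|^2_g$ first and converts the mixed term at the end.
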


In the case that $r=1$, the identity in Proposition \ref{prop:Bochner type identity} can be found in \cite{Ber07}.

On the other hand, it is known that a Nakano semipositive Riemannian trivial bundle over $\mr^n$ satisfies the optimal $L^2$-estimate condition.
The following result is an analogue of H\"ormander's $L^2$-estimate for the $\bar\partial$-equantion \cite{Hor65}.

\begin{thm}[{\cite[Theorem 4]{Cor19}}]\label{thm:L^2 estimate for vector bundle}
Let $g: \mr^n \ra Sym_{r}(\mr)^{+}$ be a Riemannian metric on the trivial vector bundle $p: E=\mr^n\times\mr^r\ra\mr^n$.
Assume that $(E,g)$ is Nakano semipositive and $\int_{\mr^n} |g|< + \infty$.
Then for any $d$-closed $f\in \Lambda^1(\mr^n, E)$, the equation $du=f$ can be solved with $u\in L^2(\mr^n, E)$ satisfying the following estimate
$$\int_{\mr^n}|u|^2_g\leq \int_{\mr^n}\langle(\Theta^g(x))^{-1}f, f\rangle_g dx.$$
\end{thm}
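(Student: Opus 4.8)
The plan is to run H\"ormander's functional-analytic solvability scheme for the operator $d$, with the Bochner identity of Proposition~\ref{prop:Bochner type identity} playing the role of the Bochner--Kodaira--Nakano identity. View $d$ as a densely defined closed operator $T\colon L^2(\mr^n,E)\to L^2_1(\mr^n,E)$, together with $S:=d\colon L^2_1(\mr^n,E)\to L^2_2(\mr^n,E)$; then $S\circ T=0$ because $d\circ d=0$. Since $f$ is $d$-closed, that is, $f\in\ker S$, the standard duality argument shows it suffices to prove the a priori inequality
$$
|\langle\langle f,\alpha\rangle\rangle_g|^2\leq\Bigl(\int_{\mr^n}\langle(\Theta^g)^{-1}f,f\rangle_g\,dx\Bigr)\,\|d^*\alpha\|_g^2\qquad\text{for every }\alpha\in\mathrm{Dom}(d^*)\cap\ker S,
$$
after which the Riesz representation theorem produces $u\in L^2(\mr^n,E)$ with $du=f$ and $\|u\|_g^2$ no larger than the right-hand constant. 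Restricting to $\alpha\in\ker S$ is harmless: writing a general $\alpha\in\mathrm{Dom}(d^*)$ as an orthogonal sum $\alpha=\alpha_1+\alpha_2$ with $\alpha_1\in\ker S$ and $\alpha_2\in(\ker S)^\perp\subseteq(\mathrm{Im}\,T)^\perp=\ker d^*$, one gets $d^*\alpha=d^*\alpha_1$ and $\langle\langle f,\alpha\rangle\rangle_g=\langle\langle f,\alpha_1\rangle\rangle_g$, the latter since $f\in\ker S$.

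To prove the a priori inequality I would first reduce to $\alpha$ smooth with compact support. As $\mr^n$ with its Euclidean metric is complete, a cutoff argument (multiplying by $\chi(\cdot/R)$ for a fixed bump function $\chi$ and letting $R\to\infty$) approximates any $\alpha\in\mathrm{Dom}(d)\cap\mathrm{Dom}(d^*)$ by compactly supported smooth forms in the graph norm $\|\alpha\|_g+\|d\alpha\|_g+\|d^*\alpha\|_g$; this simultaneously shows that the formal adjoint in Proposition~\ref{prop:Bochner type identity} coincides with the Hilbert-space adjoint $d^*$. For $\alpha=\sum_i\alpha_i\,dx_i$ smooth, compactly supported, with $d\alpha=0$, Proposition~\ref{prop:Bochner type identity} together with Nakano semipositivity of $(E,g)$ gives
$$
\|d^*\alpha\|_g^2=\int_{\mr^n}\sum_{i,j=1}^n\langle\theta^g_{ij}\alpha_i,\alpha_j\rangle_g\,dx+\int_{\mr^n}\sum_{i,j=1}^n\Bigl\langle\frac{\pa\alpha_i}{\pa x_j},\frac{\pa\alpha_i}{\pa x_j}\Bigr\rangle_g\,dx\ \geq\ \int_{\mr^n}\langle\Theta^g\alpha,\alpha\rangle_g\,dx,
$$
the gradient term being nonnegative. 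Writing $f=\sum_i f_i\,dx_i$, the pointwise Cauchy--Schwarz inequality relative to the positive symmetric form $\Theta^g(x)$ reads
$$
\Bigl|\sum_{i=1}^n\langle f_i,\alpha_i\rangle_g\Bigr|^2\leq\langle(\Theta^g)^{-1}f,f\rangle_g\cdot\langle\Theta^g\alpha,\alpha\rangle_g;
$$
integrating this, applying the Cauchy--Schwarz inequality in $L^2(\mr^n,dx)$, and inserting the previous display yields
$$
|\langle\langle f,\alpha\rangle\rangle_g|^2\leq\Bigl(\int_{\mr^n}\langle(\Theta^g)^{-1}f,f\rangle_g\,dx\Bigr)\Bigl(\int_{\mr^n}\langle\Theta^g\alpha,\alpha\rangle_g\,dx\Bigr)\leq\Bigl(\int_{\mr^n}\langle(\Theta^g)^{-1}f,f\rangle_g\,dx\Bigr)\|d^*\alpha\|_g^2,
$$
which is exactly what is needed.

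I expect the main difficulty to be the functional-analytic bookkeeping rather than any geometric input: making the density/cutoff reduction fully rigorous so that Proposition~\ref{prop:Bochner type identity}, proved only for compactly supported forms, really applies to arbitrary $\alpha\in\mathrm{Dom}(d^*)\cap\ker S$, and tracing where the hypothesis $\int_{\mr^n}|g|<+\infty$ (the matrix analogue of integrability of a weight $e^{-\phi}$) enters --- plausibly to ensure that the given smooth $d$-closed $f$ actually lies in $L^2_1(\mr^n,E)$ so that $\langle\langle f,\alpha\rangle\rangle_g$ is well defined, and that $T,S$ have closed graphs so that the orthogonal decomposition used above is legitimate. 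A second, minor point is the degeneracy of $\Theta^g$ in the merely Nakano semipositive case: the pointwise Cauchy--Schwarz step should then be read with the Moore--Penrose inverse of $\Theta^g$, so that the estimate is nonvacuous precisely where $f$ lies in the range of $\Theta^g$, which finiteness of the right-hand side forces almost everywhere; alternatively one solves first over an exhaustion of $\mr^n$ by bounded convex subdomains where the curvature can be perturbed to be strictly positive, and passes to the limit using the uniform $L^2$ bound and weak compactness.
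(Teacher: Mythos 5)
Your proposal is correct and follows exactly the route the paper itself indicates: the paper does not prove this theorem but cites \cite{Cor19} and remarks that it ``can also be deduced from Proposition \ref{prop:Bochner type identity}, combining with the generalized Cauchy--Schwarz inequality'' via H\"ormander's duality scheme, omitting precisely the details you supply (the published proof in \cite{Cor19} instead rests on a Bochner-type identity for sections rather than for $1$-forms). The only point to tighten is the one you already flag: since cutoffs do not preserve $d$-closedness, the a priori estimate should be established in the form $\|d^*\alpha\|_g^2+\|d\alpha\|_g^2\geq\int_{\mr^n}\langle\Theta^g\alpha,\alpha\rangle_g\,dx$ for all $\alpha$ in $\mathrm{Dom}(d^*)\cap\mathrm{Dom}(d)$ before restricting to $\ker S$.
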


Indeed, Theorem \ref{thm:L^2 estimate for vector bundle} is a slight reformulation of Theorem 4 in \cite{Cor19}.
In the case that $r=1$, Theorem \ref{thm:L^2 estimate for vector bundle} is due to Brascamp and Lieb \cite{Bra-Lie76}.
Theorem \ref{thm:L^2 estimate for vector bundle}  is proved in \cite{Cor19} based on a Bochner-type identity ({\cite[Fact 8]{Cor19}}) for smooth sections of $E$.
We remark that based on H\"ormander's idea for $L^2$-estimate of $\bar\partial$ \cite{Hor65},
Theorem \ref{thm:L^2 estimate for vector bundle} can also be deduced from Proposition \ref{prop:Bochner type identity},
combing with the following generalized Cauchy-Schwarz inequality:
$$
\langle \langle f,\alpha \rangle \rangle_g^2 \leq \langle \langle (\Theta^{g})^{-1} f,f \rangle \rangle_g \cdot \langle \langle \Theta^{g} \alpha, \alpha \rangle \rangle_g
$$
where $f,\alpha \in \Lambda^{1}(\mr^n,E)$. Here we omit the details.


A direct consequence of the combination of Theorem \ref{thm:cha nakano positive} and  Theorem \ref{thm:L^2 estimate for vector bundle}
is a matrix-valued version of Theorem \ref{thm:prekopa}.

\begin{thm}{(\cite{Rau13})}\label{thm:Prekopa-matrix-valued}
Let $\tilde g(x,y):\mr^{n}_{x} \times \mr^{m}_{y} \ra Sym_{r}(\mr)^+$ be a $C^2$ Riemannnian metric on the trivial bundle $\tilde E=(\mr^n\times\mr^m)\times\mr^r\ra \mr^n\times\mr^m$ over $\mr^n\times\mr^m$.
Define
$$
g(x)=\int_{\mr^{m}} \tilde g(x,y) dy \in Sym_{r}(\mr)^+.
$$
If $(\tilde E,\tilde g)$ is Nakano semipositve and $g$ is $C^2$ smooth, then $g$ is Nakano semipositive, viewed as a Riemannian metric on the trivial bundle $E=\mr^n\times\mr^r$ over $\mr^n$.
\end{thm}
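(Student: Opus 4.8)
The plan is to deduce the theorem by combining Theorem~\ref{thm:cha nakano positive} with Theorem~\ref{thm:L^2 estimate for vector bundle}: it suffices to prove that $g$ satisfies the optimal $d$-$L^2$ estimate condition on $\mr^n$, for then Theorem~\ref{thm:cha nakano positive} gives that $(E,g)$ is Nakano semipositive. So I fix a $C^2$ strictly convex function $\psi$ on $\mr^n$ and a $d$-closed $f=\sum_{i=1}^{n}f_i\,dx_i\in\Lambda^1(\mr^n,E)$ with compact support $K$, and must solve $du=f$ on $\mr^n$ with $\int_{\mr^n}\langle u,u\rangle_g e^{-\psi}\,dx\le\int_{\mr^n}\langle(Hess\,\psi)^{-1}f,f\rangle_g e^{-\psi}\,dx$.

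The key is to lift the problem to $\mr^n\times\mr^m$ along the projection $\pi:\mr^n\times\mr^m\ra\mr^n$. Put $\tilde f:=\sum_{i=1}^{n}(f_i\circ\pi)\,dx_i\in\Lambda^1(\mr^{n+m},\tilde E)$; this is again $d$-closed. Let $\varepsilon_k=1/k$ and $\psi_k=\psi+\rho_k$, where $\rho_k\ge 0$ is a $C^2$ convex function vanishing on the ball $B_k\supset K$ and growing fast enough near infinity that $\int_{\mr^n}|g(x)|e^{-\psi_k(x)}\,dx<+\infty$; such $\rho_k$ exist because $|g|e^{-\psi}$ is continuous (as $g$ is $C^2$) and can be dominated outside $B_k$ by a $C^2$ convex function vanishing on $B_k$, and $\psi_k$ is then automatically strictly convex since $\psi$ is. On $\tilde E$ consider the metric $\hat g_k:=e^{-\psi_k(x)-\varepsilon_k|y|^2}\tilde g$. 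From the elementary rule $\theta^{e^{-w}\tilde g}_{jk}=\theta^{\tilde g}_{jk}+w_{jk}\,\mathrm{Id}$ (with $w_{jk}$ the second partials of the scalar $w$) one finds $\Theta^{\hat g_k}=\Theta^{\tilde g}+H_k\otimes\mathrm{Id}$, where $H_k:=Hess_{(x,y)}(\psi_k(x)+\varepsilon_k|y|^2)$; since $\tilde g$ is Nakano semipositive and $H_k$ is positive definite, $\hat g_k$ is Nakano positive and $\Theta^{\hat g_k}\ge H_k\otimes\mathrm{Id}>0$. Moreover $\int_{\mr^{n+m}}|\hat g_k|\le\int_{\mr^{n+m}}|\tilde g|\,e^{-\psi_k(x)}\,dx\,dy\le C\int_{\mr^n}|g(x)|e^{-\psi_k(x)}\,dx<+\infty$. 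Hence Theorem~\ref{thm:L^2 estimate for vector bundle} applies to $(\tilde E,\hat g_k)$ and $\tilde f$ and produces $\tilde u_k$ with $d\tilde u_k=\tilde f$ and $\int_{\mr^{n+m}}|\tilde u_k|^2_{\hat g_k}\le\int_{\mr^{n+m}}\langle(\Theta^{\hat g_k})^{-1}\tilde f,\tilde f\rangle_{\hat g_k}$.

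Now $d\tilde u_k=\tilde f$ has no $dy$-component, so $\tilde u_k$ is independent of $y$: $\tilde u_k=u_k\circ\pi$ for some $u_k:\mr^n\ra\mr^r$ with $du_k=f$. Set $g_k(x):=\int_{\mr^m}\tilde g(x,y)e^{-\varepsilon_k|y|^2}\,dy$. Integrating out $y$, the left-hand side above equals $\int_{\mr^n}\langle u_k,u_k\rangle_{g_k}e^{-\psi_k}\,dx$, while on the right-hand side, discarding the positive term $\Theta^{\tilde g}$ and using that $H_k$ is block-diagonal and $\tilde f$ has only $dx$-components, one gets $\langle(\Theta^{\hat g_k})^{-1}\tilde f,\tilde f\rangle_{\hat g_k}\le e^{-\psi_k(x)-\varepsilon_k|y|^2}\sum_{i,j=1}^{n}\psi_k^{ij}\langle f_i,f_j\rangle_{\tilde g}$, hence
\[
\int_{\mr^n}\langle u_k,u_k\rangle_{g_k}e^{-\psi_k}\,dx\ \le\ \int_{\mr^n}\sum_{i,j=1}^{n}\psi_k^{ij}\,\langle f_i,f_j\rangle_{g_k}\,e^{-\psi_k}\,dx.
\]
Since $\varepsilon_k\downarrow 0$ we have $g_k\uparrow g$, and on $K$ one has $\psi_k=\psi$, $\psi_k^{ij}=\psi^{ij}$, so by dominated convergence the right-hand side tends to $\int_{\mr^n}\langle(Hess\,\psi)^{-1}f,f\rangle_g e^{-\psi}\,dx$. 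The family $(u_k)$ is bounded in $L^2_{\mathrm{loc}}(\mr^n)$ — on a fixed ball $B_N$, for $k\ge N$ one has $\psi_k=\psi$ and $g_k\ge g_N$, so $\int_{B_N}|u_k|^2\,dx$ is controlled by the (eventually bounded) left-hand side — so, after passing to a subsequence, $u_k\rightharpoonup u$ weakly in $L^2_{\mathrm{loc}}$, whence $du=f$. By weak lower semicontinuity of the weighted $L^2$-norms on each $B_N$, together with $g_k\uparrow g$ and monotone convergence, we get $\int_{\mr^n}\langle u,u\rangle_g e^{-\psi}\,dx\le\int_{\mr^n}\langle(Hess\,\psi)^{-1}f,f\rangle_g e^{-\psi}\,dx$. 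Thus $g$ satisfies the optimal $d$-$L^2$ estimate condition, and Theorem~\ref{thm:cha nakano positive} finishes the proof.

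I expect the genuine obstacle to be the application of Theorem~\ref{thm:L^2 estimate for vector bundle} in the second paragraph: it requires the ambient metric on $\mr^{n+m}$ to have finite total mass, whereas the natural weight $e^{-\psi(x)}\tilde g$ need not (already for $r=1$, $\int_{\mr^{n+m}}e^{-\psi(x)}\tilde g\,dx\,dy=\int_{\mr^n}e^{-\phi-\psi}\,dx$ may diverge for convex $\phi,\psi$). This is what forces the approximation of $\psi$ by convex weights $\psi_k$ — which, being convex, can only be altered outside a large ball, yet must outgrow the a priori uncontrolled $|g|$ — and the subsequent limiting argument; the two algebraic inputs, the curvature rule under a scalar twist and the resulting operator inequality for $(\Theta^{\hat g_k})^{-1}$, are routine.
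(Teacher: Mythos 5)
Your proposal follows the same route as the paper's own proof: lift $f$ and $\psi$ to $\mr^n\times\mr^m$, solve $d\tilde u=\tilde f$ there by Theorem \ref{thm:L^2 estimate for vector bundle}, note that $\tilde u$ is independent of $y$ and descends to a solution $u$ of $du=f$ on $\mr^n$, integrate out $y$ by Fubini, and conclude via Theorem \ref{thm:cha nakano positive}. The difference is one of rigor rather than strategy. The paper invokes Theorem \ref{thm:L^2 estimate for vector bundle} directly for the metric $e^{-\tilde\psi}\tilde g$ without verifying its hypotheses: the total mass $\int_{\mr^{n+m}}|\tilde g|\,e^{-\tilde\psi}\,dx\,dy$ need not be finite, and the curvature $\Theta^{\tilde g}+(Hess_{(x,y)}\tilde\psi)\otimes\mathrm{Id}$ is degenerate in the $y$-directions, so the inverse implicit in the stated estimate is not literally defined. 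Your two regularizations --- the factor $e^{-\varepsilon_k|y|^2}$ to make the Hessian nondegenerate and integrable in $y$, and the convex $\rho_k$ vanishing on a large ball but outgrowing $|g|e^{-\psi}$ at infinity to secure finite total mass in $x$ --- address exactly these two points, and the subsequent limiting argument (monotonicity $g_k\uparrow g$, the local $L^2$ bounds on $u_k$, weak lower semicontinuity, monotone convergence) is sound; the existence of such $\rho_k$ is also fine, since a radial convex function vanishing on a ball can be made to dominate any prescribed continuous growth. So your proof is correct; it is the paper's argument carried out with the care that the paper's own two-line invocation of Theorem \ref{thm:L^2 estimate for vector bundle} actually requires, and you correctly identified the finiteness/invertibility issue as the genuine obstacle.
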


Theorem \ref{thm:Prekopa-matrix-valued} was originally proved by Raufi in \cite{Rau13}.
Raufi's proof contains two main ingredients:
Berndtsson's method to the positivity of direct image bundles \cite{Ber09} and a Fourier transform technique.
To avoid the Fourier transform technique and complex analysis in the proof,
Cordero-Erausquin recently produced a new proof of Theorem \ref{thm:Prekopa-matrix-valued} based on Theorem \ref{thm:L^2 estimate for vector bundle},
which is motivated by the proof of Theorem \ref{thm:prekopa} given by Brascamp and Lieb \cite{Bra-Lie76}.
Our method to Theorem \ref{thm:prekopa} and Theorem \ref{thm:Prekopa-matrix-valued} is inspired by the works in \cite{DNW19} and \cite{DNWZ20}, .
and is essentially different from those in \cite{Rau13} and \cite{Cor19}.
The main idea in the proof of Theorem \ref{thm:Prekopa-matrix-valued} is as follows.
Given that $(\tilde E,\tilde g)$ is Nakano semipositive, then $(\tilde E,\tilde g)$ satisfies the optimal $d-L^2$ estimate condition by Theorem \ref{thm:L^2 estimate for vector bundle}.
By Fubini theorem, it is easy to show that $(E, g)$ also satisfies the optimal $d-L^2$ estimate condition,
and hence is Nakano semipositive by Theorem \ref{thm:cha nakano positive}.
In the recent preprint \cite{DHJ20}, Theorem \ref{thm:Prekopa-matrix-valued} is proved and generalized by a related but different method that is
based on the characterization of Nakano positivity of Hermitian holomorphic vector bundles in \cite{DNWZ20} and a group action technique.

All the above results also holds for Hemitian metrics on complex vector bundles.
Recall that a vector bundle $E$ over a manifold $M$ is called flat if the pull back bundle $\pi^*E$ over $\tilde M$ is trivial,
where $\pi:\tilde M\ra M$ is the universal covering of $M$.
In a forthcoming work, we will generalize the above results to flat vector bundles with Riemannian metrics over general Riemannian manifolds.

Though Theorem \ref{thm:char convexity} and Theorem \ref{thm:prekopa} are special cases of
Theorem \ref{thm:cha nakano positive} and Theorem \ref{thm:Prekopa-matrix-valued} respectively,
we still present their proofs here since their proofs are simpler than those of the later ones in technique
and hence can help the readers to grasp the main ideas.
The remaining of the paper is organized as follows.
In \S \ref{sec:The convexity of weight function}, we give the proof of Theorem \ref{thm:char convexity}, and in \S \ref{sec:prekopa} give the proof of
Theorem \ref{thm:prekopa}. We then establish the basic Bochner type identity (Property \ref{prop:Bochner type identity}) in \S \ref{sec:Bochner type identity},
and prove Theorem \ref{thm:cha nakano positive} in \S \ref{sec:cha of Nakano positivity} and prove Theorem \ref{thm:Prekopa-matrix-valued} in the final \S \ref{sec:matrix valued prekopa}.

$\mathbf{Acknowlegements.}$
The methods to the main results in the present paper are strongly inspired by the works in \cite{DNW19} and \cite{DNWZ20}.
The first author are grateful to Professor Jiafu Ning, Zhiwei Wang, and Xiangyu Zhou for related collaborations and discussions.
The authors are partially supported by the NSFC grant 11871451.

\section{Characterization of convex functions in terms of $L^2$ estimate for the $d$-equation}\label{sec:The convexity of weight function}
The aim of this section is to prove Theorem \ref{thm:char convexity}.

Assume $D$ is a domain in $\mr^n$ and $\phi:D \ra \mr$ is a $C^2$ smooth function,
we define $L^2(D,e^{-\phi})$ to be the Hilbert space of (real valued) measurable functions $f$ on $D$ such that
$$\|f\|^2_\phi:=\int_D|f|^2e^{-\phi}dx<\infty,$$
where $dx$ is the Lebesque measure on $\mr^n$.
The weighted inner product on $L^2(D,e^{-\phi})$ is defined in the the following natural way
$$
(f,g)_{\phi}=\int_{D} f\cdot g e^{-\phi} dx.
$$
Furthermore, for a measurable $1$-form $\alpha=\sum^{n}_{j=1} \alpha_{i}dx_i$ defined on $D$, the  weighted $L^2$ norm of $\alpha$ is defined by
$$
\|\alpha\|_{\phi}^2 =  \sum^n_{j}\|\alpha_j\|^2_\phi=\sum^n_{j} \int _{D}|\alpha _j|^2e^{-\phi} dx.
$$
We denote by $L^2_1(D,e^{-\phi})$ the Hilbert space of measurable $1$-forms on $D$ with finite norm.
The inner product on $L^2_1(D, e^{-\phi})$ is given by
$$(\alpha,\beta)_\phi:=\sum^n_{j=1}(\alpha_j,\beta_j)_\phi=\sum^n_{j=1}\int_D\alpha_j\beta_je^{-\phi}$$
for $\alpha=\sum^n_{j=1}\alpha_jdx_j,\ \beta=\sum^n_{j=1}\beta_j dx_j$.

A simple calculation shows that the formal adjoint of the densely defined operator $d:L^2(D,e^{-\phi})\ra L^2_1(D, e^{-\phi})$ is give by
$$
\delta _{\phi} \alpha =-\sum^{n}_{j=1} (\frac{\partial\alpha_j}{\partial x_j}-\frac{\partial\phi}{\partial x_j}\cdot\alpha _j ),
$$
where $\alpha$ is any smooth $1$-form on $D$ with compact support.

The following identity is required for the proof.
\begin{lem}[{\cite[Proposition 3.1]{Ber07}}]\label{lem:bochner type identity for line bundle} Let $\alpha=\sum^n_j\alpha_jdx_j$ be a smooth compactly supported $1$-form on $\mr^n$. Then
\begin{equation}
\int _{\mr^n} (\sum^{n}_{j,k=1} \frac{\partial^2\phi}{\partial x_j\partial x_k} \alpha _{j} \alpha _{k} +\sum^{n}_{j,k=1} |\frac{\partial\alpha_j}{\partial x_k} |^2)e^{-\phi} dx=\int _{\mr^n} (|\delta_{\phi } \alpha |^2+|d\alpha |^2)e^{-\phi} dx.
\end{equation}
\end{lem}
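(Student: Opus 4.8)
The plan is to expand the right-hand side directly, integrate by parts, and collect terms. First I would write out $|\delta_\phi\alpha|^2 = \left(\sum_j \bigl(\tfrac{\partial\alpha_j}{\partial x_j} - \tfrac{\partial\phi}{\partial x_j}\alpha_j\bigr)\right)^2$ and $|d\alpha|^2 = \sum_{j<k}\bigl(\tfrac{\partial\alpha_k}{\partial x_j} - \tfrac{\partial\alpha_j}{\partial x_k}\bigr)^2 = \tfrac12\sum_{j,k}\bigl(\tfrac{\partial\alpha_k}{\partial x_j} - \tfrac{\partial\alpha_j}{\partial x_k}\bigr)^2$, each against the weight $e^{-\phi}$. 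The term $\sum_{j,k}\bigl|\tfrac{\partial\alpha_j}{\partial x_k}\bigr|^2$ that we want on the left already appears (after expanding the $|d\alpha|^2$ square) together with the cross term $-\sum_{j,k}\tfrac{\partial\alpha_k}{\partial x_j}\tfrac{\partial\alpha_j}{\partial x_k}$; the job is to show this cross term, combined with the cross terms coming from $|\delta_\phi\alpha|^2$, reassembles into the Hessian term $\sum_{j,k}\tfrac{\partial^2\phi}{\partial x_j\partial x_k}\alpha_j\alpha_k$.

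The key step is a double integration by parts on the mixed second-derivative term. Since $\alpha$ has compact support, I would integrate $\int_{\mr^n}\tfrac{\partial\alpha_k}{\partial x_j}\tfrac{\partial\alpha_j}{\partial x_k}e^{-\phi}\,dx$ by parts moving $\tfrac{\partial}{\partial x_j}$ off $\alpha_k$; the derivative hits $\tfrac{\partial\alpha_j}{\partial x_k}$ (giving a $\tfrac{\partial^2\alpha_j}{\partial x_j\partial x_k}$ term) and hits the weight (giving a $-\tfrac{\partial\phi}{\partial x_j}\tfrac{\partial\alpha_j}{\partial x_k}$ term). A second integration by parts on the remaining second-derivative piece, now moving $\tfrac{\partial}{\partial x_k}$, produces after using symmetry of mixed partials the term $\int \tfrac{\partial\alpha_j}{\partial x_j}\tfrac{\partial\alpha_k}{\partial x_k}e^{-\phi}$ (which matches a cross term in $|\delta_\phi\alpha|^2$), a term $-\int\tfrac{\partial\alpha_j}{\partial x_j}\tfrac{\partial\phi}{\partial x_k}\alpha_k e^{-\phi}$, and crucially a term $+\int \alpha_j\tfrac{\partial^2\phi}{\partial x_j\partial x_k}\alpha_k e^{-\phi}$ plus $-\int \alpha_j\tfrac{\partial\phi}{\partial x_k}\tfrac{\partial\alpha_k}{\partial x_j}e^{-\phi}$ and $+\int\alpha_j\tfrac{\partial\phi}{\partial x_k}\tfrac{\partial\phi}{\partial x_j}\alpha_k e^{-\phi}$. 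Then I would carefully match all the first-order and zeroth-order leftover terms against the expansion of $|\delta_\phi\alpha|^2 = \sum_{j,k}\bigl(\tfrac{\partial\alpha_j}{\partial x_j} - \tfrac{\partial\phi}{\partial x_j}\alpha_j\bigr)\bigl(\tfrac{\partial\alpha_k}{\partial x_k} - \tfrac{\partial\phi}{\partial x_k}\alpha_k\bigr)$, and verify everything cancels except the Hessian term, the pure gradient-square terms, and $\sum_{j,k}\bigl|\tfrac{\partial\alpha_j}{\partial x_k}\bigr|^2$.

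The main obstacle is purely bookkeeping: keeping track of the signs and the index pairings through two integrations by parts, and making sure the $\bigl(\tfrac{\partial\phi}{\partial x_j}\bigr)\bigl(\tfrac{\partial\phi}{\partial x_k}\bigr)\alpha_j\alpha_k$ and the cross $\tfrac{\partial\phi}{\partial x_k}\tfrac{\partial\alpha}{\partial x}$ terms from the integration by parts exactly cancel the corresponding terms in $|\delta_\phi\alpha|^2$. No analytic subtlety arises because compact support legitimizes all boundary-free integrations by parts and all integrands are smooth and integrable. Since this identity is exactly \cite[Proposition 3.1]{Ber07}, I would in practice just cite it, but the computation above is the self-contained route.
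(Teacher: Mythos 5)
Your computation is correct: expanding $|d\alpha|^2=\sum_{j,k}|\partial\alpha_j/\partial x_k|^2-\sum_{j,k}(\partial\alpha_k/\partial x_j)(\partial\alpha_j/\partial x_k)$ and integrating the mixed term by parts twice (plus once more on the leftover $\int\phi_j\alpha_k(\partial\alpha_j/\partial x_k)e^{-\phi}$ piece) does produce exactly the Hessian term $\sum_{j,k}\phi_{jk}\alpha_j\alpha_k$ together with terms that cancel the cross terms of $|\delta_\phi\alpha|^2$, and compact support justifies every step. Note that the paper gives no proof of this scalar lemma at all --- it simply cites \cite[Proposition 3.1]{Ber07} --- but it does prove the rank-$r$ generalization (Proposition \ref{prop:Bochner type identity}) in \S\ref{sec:Bochner type identity}, and there the computation is organized differently: instead of expanding both squares, the paper writes $\int\langle d^*\alpha,d^*\alpha\rangle_g=\int\langle\alpha,dd^*\alpha\rangle_g$, extracts the curvature term from the commutator of $\partial/\partial x_j$ with the twisted derivative $\delta_i$, and then converts $\sum_{i,j}\langle\partial\alpha_i/\partial x_j,\partial\alpha_j/\partial x_i\rangle_g$ into $-|d\alpha|^2+\sum_{i,j}|\partial\alpha_i/\partial x_j|^2$. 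The two routes are the same integrations by parts in different packaging; the paper's version has the advantage of generalizing verbatim to the vector-bundle case (where $g^{-1}\partial g/\partial x_i$ does not commute with differentiation and the curvature $\theta^g_{ij}$ appears naturally as a commutator), while your direct expansion is the more elementary and self-contained argument for the scalar case. The only caveat is that your sketch attributes the Hessian and $\phi_j\phi_k$ terms to the second integration by parts, whereas they actually arise from one further integration by parts on the first-order term $\int\phi_j\alpha_k(\partial\alpha_j/\partial x_k)e^{-\phi}$; this is precisely the bookkeeping you flagged, and it does close up.
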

\par Now we are ready to prove Theorem \ref{thm:char convexity}, for the convenience, we restate here.

\begin{thm}[=Theorem \ref{thm:char convexity}]
Let $D$ be a domain in $\mr^n$ and $\phi:D\ra\mr$ be a $C^2$ function.
If for any $d$-closed smooth $1$-form $f=\sum^n_{j=1}f_jdx_j$ on $D$ with compact support and any smooth strictly convex function $\psi$ on $D$,
the equation $du=f$ can be solved on $D$ with the estimate
$$
 \int_{D} |u|^2 e^{-\phi-\psi} dx \leq \int_{D}\sum^{n}_{i,j=1}\psi^{ij}f_{i}f_{j}e^{-\phi-\psi} dx,
 $$
 then $\phi$ is a convex function, where  $dx$ is the Lebesgue measure on $\mr^n$  and $(\psi^{ij})_{n\times n}$ stands for the inverse of the matrix $(\psi_{ij})_{n\times n}$, with $\psi_{ij}=\frac{\pa^2 \psi}{\pa x_{i} \pa x_{j}}$.
\end{thm}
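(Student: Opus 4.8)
The plan is to argue by contradiction: suppose $\phi$ is not convex at some point $x_0 \in D$, so that the Hessian $(\phi_{jk}(x_0))$ has a negative eigenvalue; by translating we may assume $x_0 = 0 \in D$. The strategy is to feed carefully chosen test data $(f, \psi)$ into the hypothesized $L^2$ estimate and, using the Bochner-type identity of Lemma \ref{lem:bochner type identity for line bundle}, derive a quantitative inequality at the scale of a small ball around $0$ that forces $\sum_{j,k}\phi_{jk}(0)a_ja_k \geq 0$ for every vector $a$, contradicting the assumption.

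Concretely, the first step is to recall the abstract functional-analytic consequence of solvability with estimates: if $du = f$ is solvable with $\int_D |u|^2 e^{-\phi-\psi} \leq \int_D \sum \psi^{ij} f_i f_j \, e^{-\phi-\psi}$ for the given $d$-closed $f$, then for every smooth compactly supported $1$-form $\alpha$ one has the dual inequality
\begin{equation*}
|(f,\alpha)_{\phi+\psi}|^2 \leq \Big(\int_D \sum_{i,j}\psi^{ij}f_if_j\, e^{-\phi-\psi}dx\Big)\Big(\int_D \sum_{i,j}\psi_{ij}\alpha_i\alpha_j\, e^{-\phi-\psi}dx + \|\delta_{\phi+\psi}\alpha\|^2_{\phi+\psi}\Big),
\end{equation*}
obtained in the standard Hörmander way: write $(f,\alpha) = (u, \delta\alpha) + (du, \text{stuff})$ — more precisely, decompose $\alpha$ into its component along $\overline{\mathrm{Im}\, d}$ and its orthogonal complement (the $\delta_{\phi+\psi}$-closed forms), use $du = f$ on the first piece via Cauchy–Schwarz against the weighted norm, and note $f$ pairs to zero against the second piece. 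Here I apply the identity with weight $\phi + \psi$ in place of $\phi$; the adjoint is $\delta_{\phi+\psi}$.

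The second and main step is the choice of test data. Following the approach of \cite{DNWZ20} and \cite{DNW19}, I would fix a vector $a\in\mr^n$ and take $\alpha$ to be essentially $a \cdot \chi$ for a bump function $\chi$ localized at scale $\varepsilon$ near $0$ (so $\alpha = \sum_j a_j \chi\, dx_j$), take $f = d(\text{something})$ a $d$-exact $1$-form tailored so that the pairing $(f,\alpha)_{\phi+\psi}$ is comparable to the "main term", and choose $\psi$ to be a small multiple of $|x|^2$ (say $\psi = \tau|x|^2$ with $\tau\to 0^+$) so that $\psi_{ij} = 2\tau\delta_{ij}$ and $\psi^{ij} = \frac{1}{2\tau}\delta_{ij}$. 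Plugging into the Bochner identity, the term $\int \sum \psi_{ij}\alpha_i\alpha_j e^{-\phi-\psi}$ becomes $2\tau\int|\alpha|^2 e^{-\phi-\psi}$, which is small, while $\|\delta_{\phi+\psi}\alpha\|^2$ must be controlled by differentiating $\chi$; the gradient terms of $\chi$ are the dangerous ones and must be absorbed. The key point is that after optimizing the scaling of $\chi$ against $\tau$ and $\varepsilon$, the surviving obstruction in the inequality is exactly the Hessian term $\int \sum \phi_{jk}\alpha_j\alpha_k e^{-\phi-\psi}$ from the left-hand side of Lemma \ref{lem:bochner type identity for line bundle}, evaluated near $0$ it is $\approx \big(\sum_{j,k}\phi_{jk}(0)a_ja_k\big)\int|\chi|^2$, and the inequality forces this to be $\geq 0$ (up to errors vanishing as $\varepsilon,\tau\to 0$). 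Since $a$ is arbitrary, $\mathrm{Hess}\,\phi(0) \geq 0$.

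I expect the hard part to be the bookkeeping in the second step: one must choose the relative rates of $\varepsilon$ (spatial scale of the bump) and $\tau$ (the coefficient in $\psi = \tau|x|^2$) so that simultaneously (a) the $\psi_{ij}\alpha_i\alpha_j$ term is negligible, (b) the $\delta_{\phi+\psi}\alpha$ correction coming from $\partial\chi$ is negligible, (c) the right-hand side $\int \psi^{ij}f_if_j e^{-\phi-\psi}$ (which blows up like $1/\tau$) is balanced by a correspondingly small choice of $f$, and (d) the main pairing $(f,\alpha)$ does not degenerate. This is a delicate but routine limiting argument once the correct ansatz is found; its two-variable nature (needing both a small weight perturbation and a shrinking bump) is what makes it more than a one-line computation. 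A cleaner alternative, also in the spirit of \cite{DNWZ20}, is to rescale: replace $\phi(x)$ by $\phi(\varepsilon x)$ on a fixed ball, which turns the Hessian into $\varepsilon^2\mathrm{Hess}\,\phi(0) + o(\varepsilon^2)$ and lets one work at a fixed scale while sending $\varepsilon\to 0$; I would present whichever of the two is shorter.
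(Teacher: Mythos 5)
Your overall architecture --- argue by contradiction, dualize the solvability estimate via $(f,\alpha)_{\phi+\psi}=(u,\delta_{\phi+\psi}\alpha)_{\phi+\psi}$ and Cauchy--Schwarz, convert $\|\delta_{\phi+\psi}\alpha\|^2_{\phi+\psi}$ with the Bochner identity of Lemma \ref{lem:bochner type identity for line bundle}, and test with $f=d\bigl((\sum_j\zeta_jx_j)\chi\bigr)$ and a quadratic $\psi$ --- is exactly the paper's. But the quantitative mechanism you propose does not close. First, the term $\int\sum\psi_{ij}\alpha_i\alpha_j$ is not an additive error to be made small: in the correct bookkeeping one takes $\alpha=(\mathrm{Hess}\,\psi)^{-1}f$ (not a generic bump $a\chi$), so that $(f,\alpha)_{\phi+\psi}=\int\sum\psi^{ij}f_if_j\,e^{-\phi-\psi}=\int\sum\psi_{ij}\alpha_i\alpha_j\,e^{-\phi-\psi}$ cancels one factor on the right exactly, and the $\psi$-part of the Hessian from the Bochner identity cancels as well, leaving
$$
0\le\int_D\Bigl(\sum_{j,k}\phi_{jk}\alpha_j\alpha_k+\sum_{j,k}\bigl|\tfrac{\partial\alpha_j}{\partial x_k}\bigr|^2\Bigr)e^{-\phi-\psi}\,dx.
$$
The second term lives where $\nabla\chi\neq0$, i.e.\ on the annulus $B_r\setminus B_{r/2}$. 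With your choice $\psi=\tau|x|^2$, $\tau\to0^+$, the weight $e^{-\tau|x|^2}$ is uniformly comparable to $1$ on the support of $\chi$, so it localizes nothing: you are comparing a cutoff-gradient term of size $\sim r^{n-2}$ (or $\varepsilon^{n-2}$ if you shrink the bump) against a main negative term of size $\sim c\,r^{n}$ (resp.\ $c\,\varepsilon^{n}$), and the error dominates for every choice of scales. No optimization of $\varepsilon$ against $\tau$ can repair this, since the dual inequality is homogeneous of degree two in $f$ (so your item (c), rescaling $f$, is vacuous), and the only available tool for suppressing the annulus is the weight itself. Your alternative of rescaling $\phi(x)\mapsto\phi(\varepsilon x)$ makes matters worse: it multiplies the Hessian by $\varepsilon^2$ while the cutoff-gradient term on a fixed ball stays of order one.

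The paper's fix is to send the convexity parameter to $+\infty$, not to $0$: take $\psi_s(x)=s\bigl(|x|^2-\tfrac{r^2}{4}\bigr)$ and $\alpha^s=\tfrac{1}{2s}f$. Then $\psi_s\le0$ on $B_{r/2}$, so after multiplying the displayed inequality by $s^2$ the main term is bounded above by $-\tfrac{c}{4}\int_{B_{r/2}}e^{-\phi}<0$ uniformly in $s$, while $\psi_s\to+\infty$ pointwise on $D\setminus B_{r/2}$ forces both error terms (the $\phi$-Hessian contribution from the annulus and the $|\nabla\alpha^s|^2$ term, each bounded by $C/s$ before weighting) to vanish as $s\to\infty$. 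This reversal of the limit in $\psi$ is the one idea your proposal is missing; with it, the rest of your outline goes through essentially as written.
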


\begin{proof}
With Lemma \ref{lem:bochner type identity for line bundle} in hand, we can now follow the idea of the proof of Theorem 1.1 in \cite{DNW19}.

Let $\psi$ be any smooth strictly convex function on $D$. By assumption,
the equation $du=f$ on $D$ can be solved for any compactly supported $d$-closed $1$-form $f=\sum^n_{j=1}f_jdx_j$ with the estimate:
$$
\int _{D} |u|^2 e^{-\phi-\psi} dx\leq \int _{D} \sum^{n}_{j,k=1} \psi ^{jk}f_j f_k e^{-\phi-\psi} dx.
$$
\par For any $d$-closed $1$-form $\alpha$ with compact support we have:
$$
\begin{aligned}
|(\alpha ,f )_{\phi+\psi}| & = |(\alpha, du )_{\phi+\psi}|=|(\delta_{\phi+\psi} \alpha, u)_{\phi+\psi}| \\
&\leq \|u\|_{\phi+\psi} \cdot  \|\delta_{\phi+\psi} \alpha \|_{\phi+\psi}.
\end{aligned}
$$
Combing this with Lemma \ref{lem:bochner type identity for line bundle}, we have
\begin{equation}\label{ineq:3}
|(\alpha ,f )_{\phi+\psi}|^2
\leq \int_{D} \sum^{n}_{j,k=1} \psi ^{jk} f_j f_{k} e^{-\phi-\psi}  dx\times \int_{D} \sum^{n}_{j,k=1} \left((\phi_{jk}+\psi_{jk}) \alpha _{j} \alpha _{k} +|\frac{\partial \alpha_j}{\partial x_k}|^2\right)e^{-\phi-\psi}dx.
\end{equation}
Since the inequality (\ref{ineq:3}) holds for any $\alpha$ with compact support, setting
$$
(\alpha _1,...,\alpha _n)=(f_1,...,f_n)(\psi ^{jk}),
$$
then the left hand in \eqref{ineq:3} becomes
$$\left(\sum^n_{j,k=1}\int_D \psi^{jk}f_jf_ke^{-\phi-\psi}\right)^2,$$
which is also equal to
$$\left(\sum^n_{j,k=1}\int_D \psi_{jk}\alpha_j\alpha_ke^{-\phi-\psi}\right)^2.$$
We therefore obtain form  \eqref{ineq:3} the following inequality:
\begin{equation}\label{1}
\int _D \sum^{n}_{j,k=1} \phi_{jk} \alpha _{j} \alpha _{k}e^{-\phi-\psi} +\int_D\sum^{n}_{j,k=1} |\frac{\partial \alpha_j}{\partial x_k} |^2e^{-\phi-\psi} dx\geq 0.
\end{equation}

The next thing to do is to argue by contradiction.

Suppose $\phi$ is not convex,
then there exists $x_0 \in D$, $r>0$, a constant $c>0$, and $\zeta =(\zeta_1,...,\zeta_n )\in \mr^n $ with $|\zeta|=1$ such that
$$
\sum^{n}_{j,k=1} \phi _{jk}(x)\zeta_{j}\zeta_{k}<-c
$$
holds for any $x\in B(x_0,r)\subset D $, where $B(x_0, r)$ is the ball centered at $x_0$ with radius $r$.

For the simplicity, we may assume $x_0=0$ and write $B(0,r)$ as $B_r$.
The purpose of the following construction is to show that the inequality (\ref{1}) doesn't hold for some special $\alpha$.

Since we can solve the equation $du=f$ for any $d$-closed smooth $1$-form $f$. We choose $f =dv$ with
$$
v(x)=(\sum^{n}_{j=1} \zeta _j x_j )\chi(x),
$$
viewed as a smooth function on $D$,
where $\chi \in C^{\infty }_{c}(B_r)$ satisfying $\chi (x)=1$ for $x\in B_{\frac{r}{2}}$.
Then
$$
f(x)=\sum^{n}_{j=1} \zeta _j dx_j
$$
for
$x\in B_{\frac{r}{2} }$. For $s>0$, we set
$$
\psi _s (x)=s(|x|^2-\frac{r^2}{4} ),
$$
which is a strictly convex function on $\mr^n$ with
$$
\frac{\pa^2 }{\pa x_{j} \pa x_{k}}\psi _s (x) =2s \delta_{jk}.
$$
Set
$$
(\alpha^s _1,...,\alpha^s _n )=(f_1,...,f_n )(\psi^{jk}  _s)=\frac{1}{2s}(f_1,...,f_n ),
$$
then on $B_{\frac{r}{2}}$ we have
$$
\alpha ^s(x)=\frac{1}{2s}\sum^{n}_{j=1} \zeta _j dx_j
$$
and $\frac{\pa \alpha ^s_j}{\pa x_k}\equiv 0,\ j,k=1,...,n$.
Since $f$ has compact support, there is a constant $C>0$ such that $|\alpha ^s_{j}|\leq \frac{C}{s}$ and $|\frac{\pa \alpha ^s_j}{\pa x_k}| \leq \frac{C}{s}$ hold for any $x\in D,j,k=1,...,n$ and any $s>0$.

Replacing $\alpha $ and $\psi$ in the left hand of (\ref{1}) by $\alpha^s $ and $\psi_s$ defined as above and multiplying it by $s^2$, we get
\begin{equation}
\begin{aligned}
&s^2 \int _D \sum^{n}_{j,k=1} \phi_{jk} \alpha^s _{j} \alpha^s _{k}e^{-\phi-\psi_{s}} dx +s^2 \int _D \sum^{n}_{j,k=1} |\frac{\partial\alpha^s_j}{\partial x_k} |^2e^{-\phi-\psi_{s}} dx\\
=&  s^2 (\int _{B_{\frac{r}{2}}}+\int _{D\backslash B_{\frac{r}{2}} })\sum ^{n}_{j,k=1} \phi _{ij}  \alpha ^s_j \alpha ^s_{k}e^{-\phi-\psi_s } dx+s^2 \int _D \sum^{n}_{j,k=1} |\frac{\partial\alpha^s_j}{\partial x_k} |^2e^{-\phi-\psi_{s}} dx\\
\leq & -\frac{c}{4} \int _{B_{\frac{r}{2}}}e^{-\phi-\psi_s } dx+s^{2}\int _{D\backslash B_{\frac{r}{2}} }\sum ^{n}_{j,k=1} \phi _{ij}  \alpha ^s_j \alpha ^s_{k}e^{-\phi-\psi_s  } dx+s^2 \int _{D} \sum^{n}_{j,k=1} |\frac{\partial\alpha^s_j}{\partial x_k} |^2e^{-\phi-\psi_{s}} dx.
\end{aligned}
\end{equation}
Note that $\psi _s\nearrow +\infty$ on $D\backslash B_{\frac{r}{2}} $ as $s\ra +\infty$ and $|\alpha ^s _{j}|\leq \frac{C}{s} $ holds for any $s>0$, we get
$$
\underset{s \ra +\infty}{lim}s^{2}\int _{D\backslash B_{\frac{r}{2}} }\sum ^{n}_{j,k=1}\phi_{ij} \alpha ^s_j \alpha ^s_{k}e^{-\phi-\psi_s }dx =0.
$$
Since $\frac{\pa \alpha_j}{\pa x_k}(x)=0$ for $x\in B_{\frac{r}{2}}, j,k=1,...,n$, and $|\frac{\pa \alpha ^s _j}{\pa x_k } |\leq \frac{C}{s}$ on $D$ for $j,k=1,...,n$ and $s>0$, we get
$$
s^2 \int _D \sum ^{n}_{j,k=1} |\frac{\pa \alpha^s _j}{\pa x_k} |^2e^{-\phi-\psi_s}dx=s^2 \int _{D\backslash B_{\frac{r}{2}}} \sum ^{n}_{j,k=1} |\frac{\pa \alpha^s _j}{\pa x_k} |^2e^{-\phi-\psi_s}dx \ra 0, s \ra +\infty.
$$
Thus for sufficiently large $s$ we have
$$
\int _D (\sum^{n}_{j,k=1} \phi_{jk} \alpha^s _{j} \alpha^s _{k} +\sum^{n}_{j,k=1} |\frac{\pa \alpha_j}{\pa x_k}|^2)e^{-(\phi+\psi_s)}dx < 0.
$$
This leads to a contradiction to the inequality (\ref{1}). Thus $\phi$ is a convex function.
\end{proof}

\section{A new method to the classical Prekopa's Theorem}\label{sec:prekopa}
\par In this section we give an alternative proof of the classical Prekopa's theorem.
Our method is based on Theorem \ref{thm:char convexity}.
\begin{thm}[=Theorem \ref{thm:prekopa}]
Assume that $\tilde \phi(x,y)$ is a convex function on $\mr^n_x\times\mr^m_y$. Then the function $\phi(x)$ defined by
$$e^{-\phi(x)}:=\int_{\mr^m}e^{-\tilde \phi(x,y)}dy$$
is a convex function on $\mr^n$.
\end{thm}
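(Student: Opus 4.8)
The plan is to deduce Theorem~\ref{thm:prekopa} from Theorem~\ref{thm:char convexity}. After a routine mollification of $\tilde\phi$ (convolve with a symmetric kernel; by Jensen the mollified $\tilde\phi_\delta$ dominates $\tilde\phi$, so the associated $\phi_\delta$ is smooth, $e^{-\phi_\delta}\to e^{-\phi}$ pointwise by dominated convergence, and convexity of every $\phi_\delta$ passes to the limit), one may assume $\tilde\phi$ is smooth and $\phi\in C^2(\mr^n)$. By Theorem~\ref{thm:char convexity} it then suffices to check that $\phi$ satisfies the optimal $d$-$L^2$ estimate property, i.e.\ that for every strictly convex $\psi\in C^2(\mr^n)$ and every $d$-closed compactly supported smooth $1$-form $f=\sum_{j=1}^nf_j\,dx_j$ on $\mr^n$ the equation $du=f$ admits a solution with $\int_{\mr^n}|u|^2e^{-\phi-\psi}\,dx\le\int_{\mr^n}\sum_{i,j=1}^n\psi^{ij}f_if_j\,e^{-\phi-\psi}\,dx$.

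The key idea is to lift the problem to $\mr^{n+m}_{x,y}$. Put $\tilde f:=\sum_{j=1}^nf_j(x)\,dx_j$, a $d$-closed $1$-form on $\mr^{n+m}$. Two elementary remarks make this worthwhile. First, since $\tilde f$ has no $dy_k$-component, any solution $\tilde u$ of $d\tilde u=\tilde f$ on $\mr^{n+m}$ automatically satisfies $\pa\tilde u/\pa y_k\equiv0$, hence is $y$-independent: $\tilde u(x,y)=u(x)$ with $du=f$. Second, the defining relation $e^{-\phi(x)}=\int_{\mr^m}e^{-\tilde\phi(x,y)}\,dy$ and Fubini give
\[
\int_{\mr^{n+m}}|u(x)|^2e^{-\tilde\phi(x,y)-\psi(x)}\,dx\,dy=\int_{\mr^n}|u(x)|^2e^{-\phi(x)-\psi(x)}\,dx,
\]
and identically $\int_{\mr^{n+m}}\sum_{i,j=1}^n\psi^{ij}f_if_j\,e^{-\tilde\phi-\psi}=\int_{\mr^n}\sum_{i,j=1}^n\psi^{ij}f_if_j\,e^{-\phi-\psi}$. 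Hence it is enough to solve $d\tilde u=\tilde f$ on $\mr^{n+m}$ with $\int_{\mr^{n+m}}|\tilde u|^2e^{-\tilde\phi-\psi}\le\int_{\mr^{n+m}}\sum_{i,j=1}^n\psi^{ij}\tilde f_i\tilde f_j\,e^{-\tilde\phi-\psi}$, and this is exactly the type of estimate furnished by the $L^2$ theory of the $d$-equation for the \emph{convex} weight $\tilde\phi$. Concretely: $\tilde\phi$ being convex on $\mr^{n+m}$, it satisfies the optimal $d$-$L^2$ estimate property there by the Brascamp--Lieb theorem \cite{Bra-Lie76}; apply it to $\tilde f$ with the strictly convex function $\Psi_\varepsilon(x,y):=\psi(x)+\varepsilon|y|^2$. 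Because $\mathrm{Hess}\,\Psi_\varepsilon$ is block-diagonal and $\tilde f$ has vanishing $y$-components, only the $\psi$-block of $(\mathrm{Hess}\,\Psi_\varepsilon)^{-1}$ contributes, so the resulting estimate reads $\int_{\mr^{n+m}}|\tilde u_\varepsilon|^2e^{-\tilde\phi-\Psi_\varepsilon}\le\int_{\mr^{n+m}}\sum_{i,j=1}^n\psi^{ij}f_if_j\,e^{-\tilde\phi-\Psi_\varepsilon}$; letting $\varepsilon\to0$ (monotone convergence $e^{-\varepsilon|y|^2}\nearrow1$ on the right, a weak $L^2_{\mathrm{loc}}$ limit together with Dini's theorem on compacta on the left, all controlled by the finiteness of $\int_{\mr^m}e^{-\tilde\phi}\,dy$) yields the desired $u$, and Theorem~\ref{thm:char convexity} concludes that $\phi$ is convex.

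The step I expect to be the main obstacle is dealing with the fact that the lifted form $\tilde f$, although $d$-closed, is \emph{not} compactly supported on $\mr^{n+m}$ --- it lives on $(\mathrm{supp}\,f)\times\mr^m$ --- so the optimal $d$-$L^2$ property of $\tilde\phi$ cannot be applied to it verbatim. I would circumvent this by an exhaustion in the $y$-variable: writing $f=dv$ with $v\in C^\infty_c(\mr^n)$ (possible for $n\ge2$ since $H^1_c(\mr^n)=0$; for $n=1$ a harmless modification suffices, $v$ being bounded and locally constant near $\pm\infty$), one replaces $\tilde f$ by the compactly supported $d$-closed forms $\tilde f_\rho:=d\big(\chi_\rho(y)v(x)\big)=\chi_\rho\,\tilde f+v\,d\chi_\rho$ for cutoff functions $\chi_\rho$ exhausting $\mr^m$, applies Brascamp--Lieb to each $\tilde f_\rho$, and passes to the limit $\rho\to\infty$; the extra terms $v\,d\chi_\rho$ are supported in the shell $|y|\sim\rho$ and are killed in the limit by the Gaussian factor $e^{-\varepsilon|y|^2}$. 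Thus the substance of the argument is the careful management of this double limit ($\rho\to\infty$, then $\varepsilon\to0$) and of the weak convergence of the solutions; everything else is formal.
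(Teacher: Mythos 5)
Your proposal is correct and follows essentially the same route as the paper: lift $f$ and $\psi$ to $\mr^n_x\times\mr^m_y$, solve $d\tilde u=\tilde f$ there using the Brascamp--Lieb $L^2$ estimate for the convex weight $\tilde\phi$, note that $\tilde u$ is $y$-independent, push the estimate down by Fubini, and conclude via Theorem \ref{thm:char convexity}. The only divergence is technical: the paper invokes Theorem \ref{thm:L^2 estimate for vector bundle} directly, which requires neither compact support of $\tilde f$ nor strict convexity of the lifted weight, so your $\varepsilon|y|^2$-regularization and the $y$-cutoff double limit, while workable, are not needed.
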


\begin{proof}
 By the standard smoothing procedure, we can assume that $\tilde\phi$ is smooth.
 We can also construct a smooth convex function $h$ on $\mr^n$ such that $e^{\tilde\phi+\epsilon h}$ is integrable on $\mr^n\times\mr^m$ for any $\epsilon>0$.
 So in the proof we may assume that $\tilde\phi$ is smooth and $\int_D e^{-\tilde\phi}<\infty$.
 By Theorem \ref{thm:char convexity}, the remaining is to prove that $\phi$ satisfies the optimal $L^2$ estimate condition on $\mr^m$.

 Let $f=\sum^m_{j=1}f_j(x)dx_j$ be a compactly supported $d$-closed 1-form on $\mr^n$.
 We can also naturally view $f$ as a $d$-closed 1-form on $\mr^n\times\mr^m$ which will be denoted by $\tilde f=\sum^{n}_{j=1}\tilde f_jdx_j$.
 Let $\psi(x)$ be an arbitrary smooth strictly convex function on $\mr^n$.
 We also view $\psi$ as a smooth function on $\mr^n\times\mr^m$ which will be denoted by $\tilde \psi$.

According to Theorem \ref{thm:L^2 estimate for vector bundle}, the equation $d\tilde u=\tilde f$ can be solved on $\mr^n\times\mr^m$ with the estimate
$$
\int_{\mr _x ^n \times \mr_y ^m}|\tilde u|^2 e^{-\tilde \phi-\tilde \psi} dxdy\leq \int_{\mr _x ^n \times \mr_y ^m} \sum_{i,j=1}^n \tilde \psi ^{ij}\tilde f_i \tilde f_j e^{-\tilde \phi-\tilde \psi}dxdy,
$$
where $(\tilde\psi^{ij}(x,y))_{n\times n}$ is the inverse of the matrix $(\frac{\partial^2\tilde\psi}{\partial x_i\partial x_j})_{n\times n}$.

From the identity $d\tilde u=\tilde f$, we see that $\tilde u$ is independent of $y_1,\cdots, y_m$,
hence we can view $\tilde u$ as a function on $\mr^n_x$, which will be denoted by $u$.
It is clear that $du=f$.
By Fubini's theorem, the above inequality becomes to
$$
\int _{\mr^n }|u|^2 e^{-\phi-\psi} dx \leq \int _{\mr^m }\sum_{i,j}  \psi ^{ij} f_i f_j  e^{-\phi-\psi}dx,
$$
which implies, by Theorem \ref{thm:char convexity}, $\psi$ is a convex function on $\mr^n$.
\end{proof}

\section{A Bochner-type identity for 1-forms with values in a vector bundle}\label{sec:Bochner type identity}
Let $D\subset\mr^n$ be a domain and $E=\mr^n \times \mr^r\ra\mr^n$ be the trivial vector bundle of rank $r$ over $\mr^n$.
As in the introduction section, we denote by $\Lambda^p(D, E)$ the space of smooth $p$-forms with values in $E$, for $p\geq 0$.
If $g$ is a Riemannian metric on $E$, then we will denote by $L^2_p(D,E)$ the space of square integrable $p$-forms on $D$ with values in $E$.
For simplicity, $L^2_0(D, E)$ is just denoted by $L^2(D, E)$.
The inner product on $L^2_p(D, E)$ is defined as in the introduction.

We consider the following chain of weight Hilbert spaces
$$
L^{2} (\mr^n, E) \xrightarrow{d} L^{2}_{1} (\mr^n, E) \xrightarrow{d}  L^{2}_{2} (\mr^n, E),
$$
and study the equation
$$
du=f
$$
for $f \in L^{2} _{1}(\mr^n, E) $ with $df=0$.

 The identity $du=f$, in the sense of distributions, means that
$$
\int_{\mr^n}\sum^{n}_{i=1} u^{T}  \cdot (\frac{\pa \alpha_{i}}{\pa x_{i}}) dx=-\int_{\mr^n} \sum^{n}_{i=1} f_{i}^{T} \cdot \alpha_{i} dx,
$$
holds for any $\alpha=\sum^{n}_{i=1} \alpha_{i}dx_i \in \Lambda^1(D, E)$ with compact support.
A simple calculation shows that
$$
\langle \langle  du,\alpha \rangle \rangle_g =\langle  \langle  u, -\sum^{n}_{i=1}(g^{-1}\frac{\pa g}{\pa x_{i}} \alpha_{i}+\frac{\pa \alpha_{i}}{\pa x_{i}})  \rangle  \rangle_g,
$$
which means that the formal adjoint of the operator $d:L^2(D, E)\ra L^2_1(D, E)$ is given by
\begin{equation}
d^{*} \alpha=-\sum^{n}_{i=1}(g^{-1}\frac{\pa g}{\pa x_{i}} \alpha_{i}+\frac{\pa \alpha_{i}}{\pa x_{i}}).
\end{equation}

Now we can give the proof of Proposition \ref{prop:Bochner type identity}, which is restated as follows.

\begin{prop}[=Proposition \ref{prop:Bochner type identity}]
Assume $D$ is a domain in $\mr^n$, $E=D \times \mr^r$ is the trivial vector bundle defined over $D$, and $g: D \ra  Sym_{r}(\mr)^{+}$ is a $C^2$ Riemannian metric on $E$.
Then for any $\alpha =\sum^{n}_{i=1} \alpha_{i} dx_{i}\in \Lambda^1(D, E)$ with compact support, we have
\begin{equation}\label{eq:Bochner-type-identity-matrix}
\int_{D} \langle  d^*\alpha,d^* \alpha \rangle _{g}dx+\int_{D} \langle  d\alpha,d \alpha \rangle _{g}dx=\int_{D}\sum^{n}_{i,j=1}\langle  \theta^{g}_{ij}\alpha_{i},\alpha_{j} \rangle  _{g}dx+\int_{D} \sum^{n}_{i,j=1} \langle   \frac{\pa \alpha_{i}}{\pa x_{j}}, \frac{\pa \alpha_{i}}{\pa x_{j}} \rangle _{g} dx,
\end{equation}
where $d^*$ is the formal adjoint operator of $d:L^2(D,E)\ra L^2_1(D,E)$.
\end{prop}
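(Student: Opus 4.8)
The plan is to prove the identity by direct computation, expanding both sides in terms of the component functions $\alpha_i : D \to \mr^r$ and the metric $g$, and then reconciling the discrepancy using an integration by parts. First I would write out $\|d^*\alpha\|_g^2$ and $\|d\alpha\|_g^2$ separately. Using the formula $d^*\alpha = -\sum_i (g^{-1}\frac{\pa g}{\pa x_i}\alpha_i + \frac{\pa \alpha_i}{\pa x_i})$ derived just above, we get
\begin{equation*}
\langle d^*\alpha, d^*\alpha\rangle_g = \sum_{i,j}\left\langle g^{-1}\tfrac{\pa g}{\pa x_i}\alpha_i + \tfrac{\pa \alpha_i}{\pa x_i},\ g^{-1}\tfrac{\pa g}{\pa x_j}\alpha_j + \tfrac{\pa \alpha_j}{\pa x_j}\right\rangle_g,
\end{equation*}
while $d\alpha = \sum_{i<j}(\frac{\pa \alpha_j}{\pa x_i} - \frac{\pa \alpha_i}{\pa x_j})dx_i\wedge dx_j$ contributes $\sum_{i<j}\|\frac{\pa \alpha_j}{\pa x_i} - \frac{\pa \alpha_i}{\pa x_j}\|_g^2$ pointwise (with respect to the induced metric on $2$-forms). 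Expanding this square gives terms $\|\frac{\pa \alpha_j}{\pa x_i}\|_g^2 + \|\frac{\pa \alpha_i}{\pa x_j}\|_g^2 - 2\langle \frac{\pa \alpha_j}{\pa x_i}, \frac{\pa \alpha_i}{\pa x_j}\rangle_g$, so after summing over $i<j$ the "diagonal in the derivative index" part becomes $\sum_{i,j}\|\frac{\pa \alpha_i}{\pa x_j}\|_g^2$ minus cross terms $\sum_{i\neq j}\langle \frac{\pa \alpha_j}{\pa x_i}, \frac{\pa \alpha_i}{\pa x_j}\rangle_g$; note that the last term of the desired identity, $\sum_{i,j}\langle \frac{\pa \alpha_i}{\pa x_j}, \frac{\pa \alpha_i}{\pa x_j}\rangle_g$, is exactly this first piece.

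Next I would collect everything and see what must cancel. The terms $\frac{\pa \alpha_i}{\pa x_j}$-quadratic-in-derivatives piece is already accounted for, so the remaining task is to show that
\begin{equation*}
\int_D \Big[ \sum_{i,j}\langle g^{-1}\tfrac{\pa g}{\pa x_i}\alpha_i, g^{-1}\tfrac{\pa g}{\pa x_j}\alpha_j\rangle_g + 2\sum_{i,j}\langle g^{-1}\tfrac{\pa g}{\pa x_i}\alpha_i, \tfrac{\pa \alpha_j}{\pa x_j}\rangle_g - \sum_{i\neq j}\langle \tfrac{\pa \alpha_j}{\pa x_i}, \tfrac{\pa \alpha_i}{\pa x_j}\rangle_g\Big]\, dx = \int_D \sum_{i,j}\langle \theta^g_{ij}\alpha_i, \alpha_j\rangle_g\, dx,
\end{equation*}
where $\theta^g_{ij} = -\frac{\pa}{\pa x_j}(g^{-1}\frac{\pa g}{\pa x_i})$. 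The natural move is to integrate by parts on the mixed terms. Specifically, I would take the term $\int_D \langle g^{-1}\frac{\pa g}{\pa x_i}\alpha_i, \frac{\pa \alpha_j}{\pa x_j}\rangle_g\, dx = \int_D (\frac{\pa \alpha_j}{\pa x_j})^T g (g^{-1}\frac{\pa g}{\pa x_i})\alpha_i\, dx$, rewrite it as $\int_D (\frac{\pa \alpha_j}{\pa x_j})^T \frac{\pa g}{\pa x_i}\alpha_i\, dx$, and integrate by parts in $x_j$; and similarly take the cross term $-\int_D\langle \frac{\pa \alpha_j}{\pa x_i}, \frac{\pa \alpha_i}{\pa x_j}\rangle_g$, write it as $-\int_D (\frac{\pa \alpha_i}{\pa x_j})^T g \frac{\pa \alpha_j}{\pa x_i}$ and integrate by parts in $x_i$ or $x_j$. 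The point is that upon differentiating $g$, $g^{-1}$, or $\frac{\pa g}{\pa x_i}$, one produces precisely the second-derivative-of-$g$ terms assembled in $\theta^g_{ij}$, together with first-derivative terms that must cancel against the $\langle g^{-1}\frac{\pa g}{\pa x_i}\alpha_i, g^{-1}\frac{\pa g}{\pa x_j}\alpha_j\rangle_g$ piece and the surviving $\frac{\pa \alpha_i}{\pa x_j}$-linear terms. It is useful here to recall $\frac{\pa}{\pa x_j}(g^{-1}) = -g^{-1}\frac{\pa g}{\pa x_j}g^{-1}$, so that $\theta^g_{ij} = -g^{-1}\frac{\pa^2 g}{\pa x_j \pa x_i} + g^{-1}\frac{\pa g}{\pa x_j}g^{-1}\frac{\pa g}{\pa x_i}$, and hence $\langle \theta^g_{ij}\alpha_i,\alpha_j\rangle_g = -\alpha_j^T \frac{\pa^2 g}{\pa x_i\pa x_j}\alpha_i + \alpha_j^T \frac{\pa g}{\pa x_j}g^{-1}\frac{\pa g}{\pa x_i}\alpha_i$; the first summand will come from integrating by parts twice on $\int (\frac{\pa \alpha_j}{\pa x_j})^T\frac{\pa g}{\pa x_i}\alpha_i$ (once to move the derivative off $\alpha_j$, and the boundary/product rule producing $\frac{\pa^2 g}{\pa x_i \pa x_j}$ and $\frac{\pa g}{\pa x_i}\frac{\pa \alpha_i}{\pa x_j}$-type terms), while the second summand is what remains of the $g^{-1}\frac{\pa g}$ quadratic terms.

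The main obstacle I anticipate is purely bookkeeping: there are several families of terms (quadratic in $\frac{\pa g}{\pa x}$ and $\alpha$, mixed $\frac{\pa g}{\pa x}\cdot\alpha\cdot\frac{\pa \alpha}{\pa x}$, and quadratic in $\frac{\pa \alpha}{\pa x}$), and one must track carefully which integration by parts moves which derivative, making sure the symmetrization over the pair of indices $(i,j)$ is handled correctly — in particular the asymmetry between the $d^*$-side (which pairs $\frac{\pa}{\pa x_i}$ with $\frac{\pa}{\pa x_i}$, a "divergence") and the $d$-side (which pairs $\frac{\pa}{\pa x_i}$ with $\frac{\pa}{\pa x_j}$, a "curl"). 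Since $\alpha$ has compact support all boundary terms vanish, so no analytic subtlety arises; the computation is entirely algebraic once the integrations by parts are set up. I would organize it by first proving the pointwise/integrated identity in the scalar case $r=1$ (which is \cite[Proposition 3.1]{Ber07} restated as Lemma \ref{lem:bochner type identity for line bundle}) as a sanity check on signs, then promoting each scalar product to the $g$-weighted inner product $v^T g u$, being careful that derivatives now also hit $g$. As a final remark, once the mixed terms are integrated by parts the identity \eqref{eq:Bochner-type-identity-matrix} drops out after collecting like terms, completing the proof.
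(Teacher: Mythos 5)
Your overall strategy---expand $\|d^*\alpha\|_g^2$ and $\|d\alpha\|_g^2$ in components and reconcile with the right-hand side by integration by parts---is viable and is in spirit the same computation as the paper's, just organized differently. However, the intermediate identity you display as ``the remaining task'' is false as written: you have dropped the off-diagonal divergence cross-terms coming from the square of $d^*\alpha$. Concretely, $\|d^*\alpha\|_g^2$ contains $\sum_{i,j}\langle \frac{\pa\alpha_i}{\pa x_i},\frac{\pa\alpha_j}{\pa x_j}\rangle_g$, while the diagonal part of $\|d\alpha\|_g^2$ is $\sum_{i\neq j}\|\frac{\pa\alpha_i}{\pa x_j}\|_g^2$, \emph{not} $\sum_{i,j}\|\frac{\pa\alpha_i}{\pa x_j}\|_g^2$ (the terms $i=j$ are absent from the curl). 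Combining these, your remaining identity should carry an extra $\sum_{i\neq j}\langle\frac{\pa\alpha_i}{\pa x_i},\frac{\pa\alpha_j}{\pa x_j}\rangle_g$ on the left. The omission is not cosmetic: already for $r=1$ and $g\equiv 1$ your displayed equation reduces to $\int_D\sum_{i\neq j}\frac{\pa\alpha_j}{\pa x_i}\frac{\pa\alpha_i}{\pa x_j}\,dx=0$, which fails (integration by parts shows it equals $\int_D\sum_{i\neq j}\frac{\pa\alpha_i}{\pa x_i}\frac{\pa\alpha_j}{\pa x_j}\,dx$, generically nonzero). With the missing family of terms restored, the plan does close up, precisely because $\int\langle\frac{\pa\alpha_i}{\pa x_i},\frac{\pa\alpha_j}{\pa x_j}\rangle_g$ and $\int\langle\frac{\pa\alpha_j}{\pa x_i},\frac{\pa\alpha_i}{\pa x_j}\rangle_g$ differ only by terms involving derivatives of $g$, which feed into $\theta^g_{ij}$.

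For comparison, the paper avoids expanding the square altogether: it writes $\int\langle d^*\alpha,d^*\alpha\rangle_g=\int\langle\alpha,dd^*\alpha\rangle_g$, expresses $d^*\alpha=-\sum_i\delta_i\alpha_i$ with $\delta_i=g^{-1}\frac{\pa g}{\pa x_i}+\frac{\pa}{\pa x_i}$, and extracts $\theta^g_{ij}$ as the commutator $\frac{\pa}{\pa x_j}\delta_i-\delta_i\frac{\pa}{\pa x_j}=-\theta^g_{ij}$. A single adjoint move then converts the remaining term into $\sum_{i,j}\langle\frac{\pa\alpha_i}{\pa x_j},\frac{\pa\alpha_j}{\pa x_i}\rangle_g$, and the pointwise identity
\begin{equation*}
\sum_{i,j}\Bigl\langle\frac{\pa\alpha_i}{\pa x_j},\frac{\pa\alpha_j}{\pa x_i}\Bigr\rangle_g=-\frac12\sum_{i,j}\Bigl\|\frac{\pa\alpha_i}{\pa x_j}-\frac{\pa\alpha_j}{\pa x_i}\Bigr\|_g^2+\sum_{i,j}\Bigl\|\frac{\pa\alpha_i}{\pa x_j}\Bigr\|_g^2
\end{equation*}
absorbs the $\|d\alpha\|_g^2$ term. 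This organization keeps all index pairs $(i,j)$ together and makes the cancellation you are tracking by hand automatic; I would recommend adopting it, or at minimum redoing your bookkeeping with the missing terms included.
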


\begin{proof}
\par Since $\alpha $ is compactly supported, we have
$$
\int_{D} \langle  d^*\alpha,d^* \alpha \rangle _{g}dx=\int_{D} \langle \alpha,dd^* \alpha \rangle _{g}dx.
$$
We know
$$
d^{*} \alpha=-\sum^{n}_{i=1}(g^{-1}\frac{\pa g}{\pa x_{i}} \alpha_{i}+\frac{\pa \alpha_{i}}{\pa x_{i}})=-\sum^{n}_{i=1} \delta_{i} \alpha_{i},
$$
where
$$
\delta_{i}=g^{-1}\frac{\pa g}{\pa x_{i}} \cdot + \frac{\pa }{\pa x_{i}},
$$
thus
$$
dd^* \alpha =-\sum^{n}_{j=1}\sum^{n}_{i=1} \frac{\pa}{\pa x_{j}} (\delta_{i} \alpha_{i})dx_{j}.
$$
Note that
$$
\frac{\pa}{\pa x_{j}} \delta_{i} \alpha_{i}=\frac{\pa}{\pa x_{j}}(g^{-1}\frac{\pa g}{\pa x_{i}})\alpha_{i}+g^{-1}\frac{\pa g}{\pa x_{i}} \frac{\pa \alpha_{i}}{\pa x_{j}}+\frac{\pa^2 \alpha_{i}}{\pa x_{i}\pa x_{j}}
$$
and
$$
\delta_{i} \frac{\pa \alpha_{i}}{\pa x_{j}}=g^{-1}\frac{\pa g}{\pa x_{i}} \frac{\pa \alpha_{i}}{\pa x_{j}}+\frac{\pa^2 \alpha_{i}}{\pa x_{i}\pa x_{j}},
$$
thus we have
$$
dd^* \alpha =-\sum^{n}_{i,j=1} \frac{\pa}{\pa x_{j}}(g^{-1}\frac{\pa g}{\pa x_{i}})\alpha_{i}dx_{j}-\sum^{n}_{i,j=1} \delta_{i} \frac{\pa \alpha_{i}}{\pa x_{j}}dx_{j}.
$$
Since
$$
\int_{D} \langle  \alpha_{k}, \delta_{i}\alpha_{i} \rangle _{g} dx=-\int_{D} \langle  \frac{\pa}{\pa x_{i}}\alpha_{k}, \alpha_{i} \rangle _{g} dx,
$$
 we have
$$
\begin{aligned}
\int_{D} \langle \alpha,dd^* \alpha \rangle _{g}dx &=\int_{D}\sum^{n}_{i,j=1}\langle  \theta^{g}_{ij}\alpha_{i},\alpha_{j} \rangle  _{g}dx-\int_{D} \sum^{n}_{i,j=1} \langle  \delta_{i} \frac{\pa \alpha_{i}}{\pa x_{j}}, \alpha_{j} \rangle _{g} dx \\
&=\int_{D}\sum^{n}_{i,j=1}\langle  \theta^{g}_{ij}\alpha_{i},\alpha_{j} \rangle  _{g}dx+\int_{D} \sum^{n}_{i,j=1} \langle   \frac{\pa \alpha_{i}}{\pa x_{j}}, \frac{\pa \alpha_{j}}{\pa x_{i}} \rangle _{g} dx.
\end{aligned}
$$
Note that
$$
\begin{aligned}
\int_{D} \sum^{n}_{i,j=1} \langle   \frac{\pa \alpha_{i}}{\pa x_{j}}, \frac{\pa \alpha_{j}}{\pa x_{i}} \rangle  _{g} dx &=\int_{D} -\half \sum^{n}_{i,j=1} || \frac{\pa \alpha_{k}}{\pa x_{j}}-\frac{\pa \alpha_{j}}{\pa x_{k}}||^{2}_{g}+ \sum^{n}_{i,j=1} \langle \frac{\pa \alpha_{i}}{\pa x_{j}},\frac{\pa \alpha_{i}}{\pa x_{j}} \rangle_{g} dx \\
&=-\int_{D} \langle d\alpha,d \alpha \rangle_{g} dx+\int_{D}\sum^{n}_{i,j=1} \langle   \frac{\pa \alpha_{i}}{\pa x_{j}}, \frac{\pa \alpha_{i}}{\pa x_{j}} \rangle _{g} dx,
\end{aligned}
$$
we have
\begin{equation*}
\int_{D} \langle  d^*\alpha,d^* \alpha \rangle _{g}dx+\int_{D} \langle  d\alpha,d \alpha \rangle _{g}dx=\int_{D}\sum^{n}_{i,j=1}\langle  \theta^{g}_{ij}\alpha_{i},\alpha_{j} \rangle  _{g}dx+\int_{D} \sum^{n}_{i,j=1} \langle   \frac{\pa \alpha_{i}}{\pa x_{j}}, \frac{\pa \alpha_{i}}{\pa x_{j}} \rangle _{g} dx,
\end{equation*}
which is that we want to prove.
\end{proof}

\section{Characterization of Nakano positivity of Riemannian vector bundles}\label{sec:cha of Nakano positivity}
In this section, we give the proof of Theorem \ref{thm:cha nakano positive}.
\begin{thm}[=Theorem \ref{thm:cha nakano positive} ]
Assume $D$ is a domain in $\mr^n$, $E=D \times \mr^r$ is the trivial vector bundle  over $D$,
and $g: D \ra  Sym_{r}(\mr)^{+}$ is a Riemann metric on $E$.
If for any smooth strictly convex function $\psi$ on $D$ and any $d$-closed $1$-form $f \in \Lambda^1(D, E)$ with compact support,
the equation $du=f$ can be solved with $u \in L^2 (D, E)$ satisfying the following estimate
$$
\int_{D} \langle u,u \rangle  _{g} e^{-\psi} dx \leq  \int_{D}\langle (Hess\; \psi)^{-1}f,f \rangle  _{g} e^{-\psi} dx,
$$
 then $(E, g)$ is Nakano semipositive.
\end{thm}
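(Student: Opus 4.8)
The plan is to follow the scheme of the proof of Theorem~\ref{thm:char convexity} in \S\ref{sec:The convexity of weight function}, with the matrix weight $g$ playing the role that $e^{-\phi}$ plays there and with the Bochner-type identity of Proposition~\ref{prop:Bochner type identity} replacing Lemma~\ref{lem:bochner type identity for line bundle}. The only genuinely new ingredient is the behaviour of the curvature operator under a conformal rescaling of the metric: if $\psi$ is a $C^2$ function on $D$ and $\tilde g:=e^{-\psi}g$, then $\tilde g^{-1}\frac{\pa\tilde g}{\pa x_j}=g^{-1}\frac{\pa g}{\pa x_j}-\frac{\pa\psi}{\pa x_j}I_r$, and differentiating once more gives $\theta^{\tilde g}_{jk}=\theta^{g}_{jk}+\psi_{jk}I_r$ with $\psi_{jk}=\frac{\pa^2\psi}{\pa x_j\pa x_k}$; equivalently $\Theta^{\tilde g}=\Theta^{g}+(\mathrm{Hess}\,\psi)\otimes I_r$. (This is the exact analogue of the scalar identity $\mathrm{Hess}(\phi+\psi)=\mathrm{Hess}\,\phi+\mathrm{Hess}\,\psi$.)

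First I would fix a $C^2$ strictly convex function $\psi$ on $D$, put $\tilde g:=e^{-\psi}g$, and for a compactly supported $d$-closed $f\in\Lambda^1(D,E)$ take the solution $u\in L^2(D,E)$ of $du=f$ furnished by the hypothesis. For any compactly supported $d$-closed $\alpha=\sum_i\alpha_i\,dx_i\in\Lambda^1(D,E)$, using $du=f$ to write $\langle\langle\alpha,f\rangle\rangle_{\tilde g}=\langle\langle d^{*}_{\tilde g}\alpha,u\rangle\rangle_{\tilde g}$ and then applying Cauchy--Schwarz gives $|\langle\langle\alpha,f\rangle\rangle_{\tilde g}|^2\le\|u\|_{\tilde g}^2\,\|d^{*}_{\tilde g}\alpha\|_{\tilde g}^2$, where $d^{*}_{\tilde g}$ is the formal adjoint of $d$ for $\tilde g$. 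The hypothesis bounds $\|u\|_{\tilde g}^2=\int_D\langle u,u\rangle_g e^{-\psi}dx$ by $A:=\int_D\langle(\mathrm{Hess}\,\psi)^{-1}f,f\rangle_g e^{-\psi}dx$, while Proposition~\ref{prop:Bochner type identity} applied to $\tilde g$, together with $d\alpha=0$ and the curvature identity above, gives
\[
\|d^{*}_{\tilde g}\alpha\|_{\tilde g}^2=\int_D\sum_{i,j}\big(\langle\theta^{g}_{ij}\alpha_i,\alpha_j\rangle_g+\psi_{ij}\langle\alpha_i,\alpha_j\rangle_g\big)e^{-\psi}dx+\int_D\sum_{i,j}\langle\tfrac{\pa\alpha_i}{\pa x_j},\tfrac{\pa\alpha_i}{\pa x_j}\rangle_g e^{-\psi}dx.
\]
Then I would substitute $\alpha_i:=\sum_k\psi^{ik}f_k$, with $(\psi^{ik})$ the inverse matrix of $(\psi_{ij})$; a short computation shows that in this case $|\langle\langle\alpha,f\rangle\rangle_{\tilde g}|^2=A^2$ and $\int_D\sum_{i,j}\psi_{ij}\langle\alpha_i,\alpha_j\rangle_g e^{-\psi}dx=A$, so that, after dividing by $A$ (harmless, since $A=0$ would force $f\equiv0$), the term $A$ cancels on both sides and one is left with the key inequality
\[
\int_D\sum_{i,j}\langle\theta^{g}_{ij}\alpha_i,\alpha_j\rangle_g e^{-\psi}dx+\int_D\sum_{i,j}\langle\tfrac{\pa\alpha_i}{\pa x_j},\tfrac{\pa\alpha_i}{\pa x_j}\rangle_g e^{-\psi}dx\ge 0,
\]
the exact analogue of inequality~(\ref{1}).

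With this in hand I would argue by contradiction. If $(E,g)$ is not Nakano semipositive, there exist a point, which we may take to be $0$, a radius $r>0$, a constant $c>0$ and vectors $u_1,\dots,u_n\in\mr^r$ with $\sum_{i,j}\langle\theta^{g}_{ij}(x)u_i,u_j\rangle_{g(x)}<-c$ for all $x\in B_r$. Exactly as in the scalar case I would take $f=dv$ with $v(x)=\chi(x)\sum_j x_j u_j$, where $\chi\in C^{\infty}_c(B_r)$ and $\chi\equiv1$ on $B_{r/2}$, so that $f$ is compactly supported, $d$-closed (being exact), and equals $\sum_j u_j\,dx_j$ on $B_{r/2}$; and $\psi_s(x):=s(|x|^2-\tfrac{r^2}{4})$, so that $\mathrm{Hess}\,\psi_s=2sI_n$ and $\alpha^s:=\tfrac1{2s}f$ is again exact, hence $d$-closed, with $\tfrac{\pa\alpha^s_i}{\pa x_j}\equiv0$ on $B_{r/2}$ and $|\alpha^s_i|,|\tfrac{\pa\alpha^s_i}{\pa x_j}|\le C/s$ on $D$. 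Substituting $\alpha^s,\psi_s$ into the key inequality and multiplying through by $s^2$, the part of the curvature integral over $B_{r/2}$ becomes $\tfrac14\int_{B_{r/2}}\sum_{i,j}\langle\theta^{g}_{ij}u_i,u_j\rangle_g e^{-\psi_s}dx$, which is $<-\tfrac c4\int_{B_{r/2}}e^{-\psi_s}dx\le-\tfrac c4|B_{r/2}|$ since $\psi_s<0$ on $B_{r/2}$, while the parts over $D\setminus B_{r/2}$ of both the curvature integral and the derivative integral tend to $0$ as $s\to\infty$: their integrands are bounded uniformly in $s$ by the $C/s$ estimates and are supported in the bounded set $B_r\setminus B_{r/2}$, on which $e^{-\psi_s}\to0$ pointwise a.e.\ because $\psi_s\nearrow+\infty$ off $\overline{B_{r/2}}$, so dominated convergence applies. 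Hence for $s$ large the left-hand side of the key inequality, evaluated at $\alpha^s$ and $\psi_s$, is strictly negative, a contradiction; therefore $(E,g)$ is Nakano semipositive.

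The real substance, I expect, sits in Proposition~\ref{prop:Bochner type identity} itself, which has already been established; granted that, the argument is a faithful transcription of the scalar proof, with the curvature-rescaling formula $\Theta^{\tilde g}=\Theta^{g}+(\mathrm{Hess}\,\psi)\otimes I_r$ and the limiting estimates being routine. The one point deserving care is that the test form $\alpha^s$ used at the end must be $d$-closed, so that the term $\|d\alpha\|_{\tilde g}^2$ of Proposition~\ref{prop:Bochner type identity} can be discarded; this holds because $\alpha^s$ is a constant multiple of the exact form $f=dv$.
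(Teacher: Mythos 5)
Your proposal is correct and follows essentially the same route as the paper's own proof: the duality/Cauchy--Schwarz step against $d^*\alpha$, the Bochner identity of Proposition \ref{prop:Bochner type identity} applied with the rescaled metric $e^{-\psi}g$, the substitution $\alpha=(\mathrm{Hess}\,\psi)^{-1}f$, and the contradiction argument with $f=dv$, $\psi_s=s(|x|^2-\tfrac{r^2}{4})$, $\alpha^s=\tfrac{1}{2s}f$. The only difference is expository: you make explicit the curvature rescaling $\theta^{e^{-\psi}g}_{jk}=\theta^{g}_{jk}+\psi_{jk}I_r$, which the paper uses implicitly when it writes $\theta^{g}_{jk}+\mathrm{Hess}\,\psi$ in its combined inequality.
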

\begin{proof}
We follow the spirit of the proof of Theorem \ref{thm:char convexity}.

Let $\psi$ be a smooth strictly convex function on $D$, and $f=\sum^n_{j=1}f_idx_i$ be an arbitrary element in $\Lambda^1(D,E)$ with compact support.
By assumption, we can solve the equation $du=f$, with the estimate
$$
\int_{D} \langle u,u \rangle  _{g} e^{-\psi} dx \leq  \int_{D}\langle (Hess\; \psi)^{-1}f,f \rangle  _{g} e^{-\psi} dx.
$$

For any $\alpha=\sum^n_{i=1} \alpha_idx_i\in \Lambda^0(D, E)$, we have
$$
\begin{aligned}
 \langle  \langle \alpha ,f \rangle   \rangle_g^2 &= \langle  \langle \alpha ,du \rangle   \rangle_g^2=\langle   \langle   d^*\alpha ,u \rangle  \rangle_g^2 \\
&\leq \langle   \langle   d^*\alpha ,d^*\alpha \rangle  \rangle_g  \cdot  \langle   \langle   u ,u \rangle  \rangle_g  .
\end{aligned}
$$
Combing this with Proposition \ref{prop:Bochner type identity}, we get
$$
\begin{aligned}
\langle   \langle  \alpha ,f \rangle  \rangle_g^2 & \leq   \int_{D}\langle  (Hess\; \psi)^{-1}f,f \rangle _{g} e^{-\psi} dx \\
& \times \left(\int_{D} \sum^{n}_{j,k=1} \langle    \frac{\pa \alpha_{j}}{\pa x_{k}}, \frac{\pa \alpha_{k}}{\pa x_{j}} \rangle _{g}  e^{-\psi}dx+\int_{D} \sum^{n}_{j,k=1} \langle  (\theta^{g}_{jk}+(Hess\; \psi))\alpha_{j},\alpha_{k} \rangle _{g} e^{-\psi}dx\right),
\end{aligned}
$$
Setting $\alpha=(Hess\; \psi)^{-1}f,$ then we obtain the following inequality:
\begin{equation}\label{main-inequality-matrix}
\int_{D} \sum^{n}_{j,k=1} \langle  \theta^{g}_{jk}\alpha_{j},\alpha_{k} \rangle _{g} e^{-\psi}dx +\int_{D} \sum^{n}_{j,k=1} \langle    \frac{\pa \alpha_{j}}{\pa x_{k}}, \frac{\pa \alpha_{k}}{\pa x_{j}} \rangle _{g} e^{-\psi} dx \geq 0.
\end{equation}

 Next we argue by contradiction. Suppose $(E, g)$ is not Nakano semipositive, then there exist $x_{0} \in D$, a constant $a>0$,
 and an $1$-form $\displaystyle \xi=\sum^{n}_{i=1}\xi_{i} dx_{i}\in\Lambda^1(D,E)$ with constant coefficients $\xi_{i} \in \mr^r$ such that
$$
\sum^{n}_{j,k=1} \langle \theta^{g}_{jk}(x)\xi_{j},\xi_{k} \rangle _{g}<-c
$$
holds for any $x \in B(x_0 ,a ) \subseteq  D$, where $c>0$ is a constant.
For simplicity, we assume $x_0 =0$, and denote $B(0,a)$ by $B_{a}$.

Choose $\chi(x)\in C^{\infty}(\mr^n)$ with support in $D_a$, such that $\chi(x)=1$ for $x \in B_{\frac{a}{2}}$.
Let
$$
v(x) =(\sum^{n}_{i=1} \xi_{i} x_{i})\chi(x) ,
$$
which is viewed as an element in $\Lambda^0(D, E)$ with compact support.
Let $f=d v\in \Lambda^0(D, E)$, then $f$ has compact support and $df=0$.
On $B_{\frac{a}{2}}$, we have
$$
f=\sum^{n}_{i=1}\xi_{i} dx_{i}.
$$

For any $s>0$, we set $\psi_{s}(x)=s(|x|^2-\frac{a^2}{4})$,
then $\psi_s$ is a smooth strictly convex function on $\mr^n$, and the Hessian of $\psi_s$ is given by $Hess\psi_s=2s I_n$,
where $I_n$ is the unit matrix of order $n$.
We set $\alpha_{s}=(Hess\; \psi_{s})^{-1}f=\frac{1}{2s}f\in \Lambda^1(D,E)$.
Since $f$ has compact support, there exists a constant $C>0$ such that $|\frac{\pa(\alpha_{s})_{k}}{\pa x_{j}}|  \leq \frac{C}{s}$ hold for any $x \in D, 1\leq j,k \leq n, \forall s >0$.
By construction, we also have $|\frac{\pa(\alpha_{s})_{k}}{\pa x_{j}}| =0$ on $B_{\frac{a}{2}}$.
We now give an estimate of the left hand side of the inequality (\ref{main-inequality-matrix}), with $\alpha$ and $\psi$ replaced by $\alpha_{s}$ and $\psi_{s}$, and multiplied by $s^2$:
$$
\begin{aligned}
& s^2\int_{D}\sum^{n}_{j,k=1} \langle \theta^{g}_{jk} (\alpha_{s})_{j},(\alpha_{s})_{k} \rangle _{g}e^{-\psi_{s}}dx +s^2\int_D\sum^{n}_{j,k=1} \langle   \frac{\pa (\alpha_{s})_{j}}{\pa x_{k}}, \frac{\pa (\alpha_{s})_{k}}{\pa x_{j}} \rangle _{g} e^{-\psi_{s}}dx \\
\leq &   -c \int_{B_{\frac{a}{2}}} e^{-\psi_{s}}dx+\frac{1}{4}\int_{D\backslash B_{\frac{a}{2}}}\sum^{n}_{j,k=1} \langle \theta^{g}_{jk}f_{j},f_{k} \rangle _{g} e^{-\psi_{s}}dx+C^2\int_{D\backslash B_{\frac{a}{2}} }\sum^{n}_{j,k=1} e^{-\psi_{s}}dx.
\end{aligned}
$$
Since $\underset{s \ra + \infty}{lim} \psi_{s} (x) =+\infty, \forall x \in D \backslash B_{\frac{a}{2}}$, and $\psi_{m}(x) \leq 0,\forall x \in B_{\frac{a}{2}}$ and $\forall s >0$,
we get
 $$
 \int_{D} \sum^{n}_{j,k=1}  \langle \frac{\pa (\alpha_{s})_{j}}{\pa x_{k}}, \frac{\pa (\alpha_{s})_{k}}{\pa x_{j}} \rangle _{g} e^{-\psi_{s}} dx+\int_{D} \sum^{n}_{j,k=1} \langle \theta^{g}_{jk}(\alpha_{s})_{j},(\alpha_{s})_{k} \rangle _{g} e^{-\psi_{s}}dx <0
 $$
for $s$ sufficiently large, which contradicts to the inequality (\ref{main-inequality-matrix}).
Thus $(E, g)$ must be Nakano semipositive.

 \end{proof}

\section{Prekopa's Theorem for matrix valued functions}\label{sec:matrix valued prekopa}
In this section we give a new proof of Prekopa's Theorem for matrix valued functions, namely Theorem \ref{thm:Prekopa-matrix-valued}.
The proof is based on an combination of Theorem \ref{thm:cha nakano positive} and Theorem \ref{thm:L^2 estimate for vector bundle}.

\begin{thm}[=Theorem \ref{thm:Prekopa-matrix-valued}]
Let $\tilde g(x,y):\mr^{n}_{x} \times \mr^{m}_{y} \ra Sym_{r}(\mr)^+$ be a $C^2$ Riemannnian metric on the trivial bundle $\tilde E=(\mr^n\times\mr^m)\times\mr^r\ra \mr^n\times\mr^m$ over $\mr^n\times\mr^m$.
Define
$$
g(x)=\int_{\mr^{m}} \tilde g(x,y) dy \in Sym_{r}(\mr)^+.
$$
If $(\tilde E, \tilde g)$ is Nakano semipositve and $\tilde g$ is $C^2$ smooth, then $ g$ is Nakano semipositive, viewed as a Riemannian metric on the trivial bundle $E=\mr^n\times\mr^r$ over $\mr^n$.
\end{thm}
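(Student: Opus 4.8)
The plan is to follow, in the vector-bundle setting, the route by which Theorem~\ref{thm:prekopa} is deduced from Theorem~\ref{thm:char convexity} in \S\ref{sec:prekopa}: use Theorem~\ref{thm:cha nakano positive} to reduce matters to an $L^2$ estimate for $g$, produce that estimate by solving the $d$-equation on $\mr^n_x\times\mr^m_y$ for $(\tilde E,\tilde g)$, and integrate out $y$ by Fubini. By Theorem~\ref{thm:cha nakano positive} it suffices to prove that $g$ satisfies the optimal $d$--$L^2$ estimate condition of Definition~\ref{def:vector bundle optimal L^2}. So I would fix a $C^2$ strictly convex function $\psi$ on $\mr^n$ and a $d$-closed $f=\sum_{j=1}^n f_j(x)\,dx_j\in\Lambda^1(\mr^n,E)$ with compact support, and must solve $du=f$ with $u\in L^2(\mr^n,E)$ and $\int_{\mr^n}\langle u,u\rangle_g e^{-\psi}\,dx\le\int_{\mr^n}\langle(\mathrm{Hess}\,\psi)^{-1}f,f\rangle_g e^{-\psi}\,dx$.

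First I would lift everything to the product. Let $\tilde f=\sum_{j=1}^n f_j(x)\,dx_j\in\Lambda^1(\mr^n\times\mr^m,\tilde E)$ be the pullback of $f$ (no $dy_k$-components, coefficients independent of $y$); it is $d$-closed. Let $\tilde\psi(x,y)=\psi(x)$, a convex function on $\mr^{n+m}$. The key computation is the behaviour of the curvature operator of the metric $\tilde g\,e^{-\tilde\psi}$: from $\theta^{\tilde g e^{-\tilde\psi}}_{jk}=\theta^{\tilde g}_{jk}+\tilde\psi_{jk}\,I_r$ one gets $\Theta^{\tilde g e^{-\tilde\psi}}=\Theta^{\tilde g}+(\mathrm{Hess}\,\tilde\psi)\otimes I_r$, and since $(\tilde E,\tilde g)$ is Nakano semipositive this dominates $(\mathrm{Hess}\,\tilde\psi)\otimes I_r$, whose only nonzero block sits in the $x$-directions and equals $(\mathrm{Hess}_x\psi)\otimes I_r$. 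Because $\tilde f$ takes values only in the $x$-directions, the $L^2$ machinery behind Theorem~\ref{thm:L^2 estimate for vector bundle} — that is, the weighted form of Proposition~\ref{prop:Bochner type identity} combined with the generalized Cauchy--Schwarz inequality recalled after Theorem~\ref{thm:L^2 estimate for vector bundle}, applied with the comparison matrix $B=(\mathrm{Hess}_x\psi)\otimes I_r\le\Theta^{\tilde g e^{-\tilde\psi}}$ — produces $\tilde u\in L^2(\mr^{n+m},\tilde E)$ with $d\tilde u=\tilde f$ and $\int_{\mr^{n+m}}\langle\tilde u,\tilde u\rangle_{\tilde g}e^{-\tilde\psi}\le\int_{\mr^{n+m}}\langle(\mathrm{Hess}_x\psi)^{-1}\tilde f,\tilde f\rangle_{\tilde g}e^{-\tilde\psi}$.

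Then I would descend. Since $\tilde f$ has no $dy_k$-component, the identity $d\tilde u=\tilde f$ forces $\partial\tilde u/\partial y_k\equiv 0$, so $\tilde u=u(x)$ for a section $u$ of $E$ over $\mr^n$, and reading off the $dx_j$-components gives $du=f$. Integrating in $y$, and using $\int_{\mr^m}u(x)^T\tilde g(x,y)u(x)\,dy=u(x)^Tg(x)u(x)$ together with the analogous identity for $\tilde f$ and the $y$-independence of $\tilde\psi$, $u$ and $\tilde f$, the last inequality collapses to $\int_{\mr^n}\langle u,u\rangle_g e^{-\psi}\,dx\le\int_{\mr^n}\langle(\mathrm{Hess}\,\psi)^{-1}f,f\rangle_g e^{-\psi}\,dx$. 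Hence $g$ satisfies the optimal $d$--$L^2$ estimate condition, and Theorem~\ref{thm:cha nakano positive} gives that $(E,g)$ is Nakano semipositive.

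I expect the main obstacle to be purely technical: making the $L^2$ solvability on $\mr^{n+m}$ legitimate. The lifted weight $\tilde\psi=\psi(x)$ is degenerate in the $y$-directions, so $\Theta^{\tilde g e^{-\tilde\psi}}$ is only positive semidefinite; the test form $\tilde f$ is not compactly supported on $\mr^{n+m}$; and $\tilde g$ carries no a priori global integrability. The standard remedy is a regularization: replace $\tilde\psi$ by $\tilde\psi_\epsilon(x,y)=\psi(x)+\epsilon|y|^2$ (now strictly convex, and an exhaustion in $y$), run the argument using $\Theta^{\tilde g e^{-\tilde\psi_\epsilon}}\ge(\mathrm{Hess}_x\psi)\otimes I_r$ and the fact that $\tilde f$ still lives in the $x$-directions, obtain a solution $u_\epsilon$ of $du_\epsilon=f$ with $\int_{\mr^n}\langle u_\epsilon,u_\epsilon\rangle_{g_\epsilon}e^{-\psi}\le\int_{\mr^n}\langle(\mathrm{Hess}\,\psi)^{-1}f,f\rangle_{g_\epsilon}e^{-\psi}$, where $g_\epsilon(x)=\int_{\mr^m}\tilde g(x,y)e^{-\epsilon|y|^2}\,dy$, and then let $\epsilon\downarrow 0$: one has $g_\epsilon\uparrow g$ monotonically, the right-hand sides increase to the desired finite bound (using compactness of $\mathrm{supp}\,f$), the $u_\epsilon$ are uniformly bounded in $L^2_{\mathrm{loc}}$, and a weak limit together with lower semicontinuity and monotone convergence yields the sharp estimate for a limit $u$. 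This bookkeeping, rather than any new idea, is where the real work lies; everything else is parallel to \S\ref{sec:prekopa} and to the proof of Theorem~\ref{thm:cha nakano positive}.
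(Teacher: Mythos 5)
Your proposal follows essentially the same route as the paper: reduce via Theorem~\ref{thm:cha nakano positive} to verifying the optimal $d$--$L^2$ estimate condition for $g$, lift $f$ and $\psi$ to the product, solve $d\tilde u=\tilde f$ there by the Nakano-semipositivity $L^2$ estimate (Theorem~\ref{thm:L^2 estimate for vector bundle}), observe that $\tilde u$ is $y$-independent, and descend by Fubini. Your closing paragraph is in fact more careful than the paper itself, which applies Theorem~\ref{thm:L^2 estimate for vector bundle} directly without addressing the degeneracy of $\mathrm{Hess}\,\tilde\psi$ in the $y$-directions or the global integrability hypothesis $\int|\tilde g|<+\infty$; your $\epsilon|y|^2$ regularization and limiting argument is a reasonable way to fill that gap.
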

\begin{proof}
 Let $f=\sum^m_{j=1}f_j(x)dx_j\in\Lambda^1(\mr^n,E)$ be a compactly supported $d$-closed 1-form on $\mr^n$ with values in $E$.
 We can naturally view $f$ as a $d$-closed 1-form on $\mr^n\times\mr^m$ with values in $\tilde E$ which will be denoted by $\tilde f=\sum^{n}_{j=1}\tilde f_jdx_j$.
 Let $\psi(x)$ be an arbitrary smooth strictly convex function on $\mr^n$.
 We also view $\psi$ as a smooth function on $\mr^n\times\mr^m$ which will be denoted by $\tilde \psi$.

According to Theorem \ref{thm:L^2 estimate for vector bundle}, the equation $d\tilde u=\tilde f$ can be solved on $\mr_x^n\times\mr_y^m$ with
$\tilde u\in L^2(\mr^n\times\mr^m, \tilde E)$ satisfying the estimate
$$
\int_{\mr^n \times \mr^m}|\tilde u|_{\tilde g}^2 e^{-\tilde \psi} dxdy\leq \int_{\mr^n \times \mr^m} \sum_{i,j=1}^n \langle \tilde \psi ^{ij}\tilde f_i, \tilde f_j\rangle_{\tilde g} e^{-\tilde \psi}dxdy,
$$
where $(\tilde\psi^{ij}(x,y))_{n\times n}$ is the inverse of the matrix $(\frac{\partial^2\tilde\psi}{\partial x_i\partial x_j})_{n\times n}$.

From the identity $d\tilde u=\tilde f$, we see that $\tilde u$ is independent of $y_1,\cdots, y_m$,
hence we can view $\tilde u$ as an element in $L^2(\mr^n,E)$, which will be denoted by $u$.
It is clear that $du=f$.
By Fubini's theorem, the above inequality becomes to
$$
\int _{\mr^n }|u|_g^2 e^{-\psi} dx \leq \int _{\mr^m }\sum_{i,j}  \langle\psi ^{ij} f_i, f_j\rangle_g  e^{-\psi}dx=\int_{\mr^n}\langle (Hess\psi)^{-1}f, f\rangle_ge^{-\psi},
$$
which implies, by Theorem \ref{thm:cha nakano positive}, that $(E,g)$ is Nakano semipositive.
\end{proof}

%

\bibliographystyle{amsplain}

\end{document}